\newtheorem{lemma}{Lemma}[section]
\newtheorem{theorem}[lemma]{Theorem}
\newtheorem{proposition}[lemma]{Proposition}
\newtheorem{corollary}[lemma]{Corollary}
\theoremstyle{definition}
\newtheorem{definition}[lemma]{Definition}
\numberwithin{equation}{section}
\numberwithin{figure}{section}
\newcommand{\Kset}{\mathcal{K}}
\newcommand{\Pset}{\mathcal{P}}
\newcommand{\Xset}{\mathcal{X}}
\newcommand{\Yset}{\mathcal{Y}}
\newcommand{\Zset}{\mathcal{Z}}
\begin{document}

\author{\centerline {\large H. Hakopian and S. Toroyan} \\ {Yerevan State University, Yerevan, Armenia}}

\title{{\huge On the uniqueness of algebraic curves passing through $n$-independent nodes}}

\maketitle

\begin{abstract}
A set of nodes is called $n$-independent if each its node has a fundamental polynomial of degree $n.$
 We proved in a previous paper [H. Hakopian and S.
Toroyan, On the minimal number of nodes determining uniquelly
algebraic curves, accepted in Proceedings of YSU] that the
minimal number of $n$-independent nodes determining uniquely the curve
of degree $k\le n$ equals to $\Kset:=(1/2)(k-1)(2n+4-k)+2.$ Or,
more precisely, for any $n$-independent set of cardinality $\Kset$ there is at most one curve of degree $k\le n$
passing through its nodes,
while there are $n$-independent
node sets of cardinality $\Kset-1$ through which pass at least
two such curves. In this paper we bring a simple characterization of the latter sets.
Namely, we prove that if two curves of degree $k\le n$ pass through the nodes of an $n$-independent
node set $\Xset$ of cardinality $\Kset-1$ then all the nodes of $\Xset$ but one belong to a (maximal) curve of degree $k-1.$
\end{abstract}

{\bf Key words:} Algebraic curves, $n$-independent
nodes, maximal curves, polynomial interpolation

{\bf Mathematics Subject Classification (2010).} 14H50, 41A05.

\section{Introduction}

Denote the space of all bivariate polynomials of total degree $\leq n$ by $\Pi_n$:

\begin{equation*}
  \Pi_n= \left\{ \sum_{i+j\leq n} a_{ij} x^i y^j \right\}.
\end{equation*}
We have that
\begin{equation*}
  N := N_n := \dim \Pi_n = (1/2)(n+1)(n+2).
\end{equation*}
Consider a set of $s$ distinct nodes
\begin{equation*}
  \Xset_s = \{(x_1,y_1), (x_2,y_2), \ldots, (x_s,y_s) \}.
\end{equation*}
The problem of finding a polynomial $p \in \Pi_n$ which satisfies the conditions
\begin{equation}\label{eq:intpr}
  p(x_i,y_i) = c_i, \quad i=1, \ldots, s,
\end{equation}
is called interpolation problem.

A polynomial $p \in \Pi_n$ is called an $n$-fundamental polynomial for a node $A=(x_k,y_k)\in\Xset_s$ if
\begin{equation*}
  p(x_i,y_i)= \delta_{ik}, \ i=1,\ldots, s,
\end{equation*}
where $\delta$ is the Kronecker symbol. We denote this fundamental polynomial
by $p_{k}^\star=p_{A}^\star=p_{A,\Xset_s}^\star$. Sometimes we call fundamental also a polynomial that vanishes at all nodes
of $\Xset_s$ but one, since it is a nonzero constant times a fundamental polynomial.

Next, let us consider an important concept of $n$-independence (see \cite{E}, \cite{HM}).

\begin{definition}
A set of nodes $\Xset$ is called \emph{$n$-independent} if all its
nodes have $n$-fundamental polynomials. Otherwise, if a node has no
$n$-fundamental polynomial, $\Xset$ is called \emph{$n$-dependent}.
\end{definition}

Fundamental polynomials are linearly independent. Therefore a necessary condition of $n$-independence of $\Xset_s$
is $s \leq N$.

Suppose a node set $\Xset_s$ is n-independent. Then by the Lagrange
formula we obtain a polynomial $p\in\Pi_n$ satisfying the
interpolation conditions $\eqref{eq:intpr}$:
\begin{equation*}
  p = \sum_{i=1}^sc_ip_{i}^\star.
\end{equation*}
In view of this, we get readily that the node set $\Xset_s$ is
$n$-independent if and only if the interpolating problem
$\eqref{eq:intpr}$ is \emph{solvable}, meaning that for any data
$(c_1,\ldots, c_s)$ there is a polynomial $p\in\Pi_n$ (not
necessarily unique) satisfying the interpolation conditions
\eqref{eq:intpr}.

\begin{definition} \label{poised}
The interpolation problem with a set of nodes $\Xset_s$ and $\Pi_n$
is called \emph{$n$-poised} if for any data $(c_1,\ldots, c_s)$
there is a \emph{unique} polynomial $p\in\Pi_n$ satisfying the
interpolation conditions \eqref{eq:intpr}.
\end{definition}
The conditions \eqref{eq:intpr} give a system of $s$ linear
equations with $N$ unknowns (the coefficients of the polynomial
$p$). The poisedness means that this system has a unique solution
for arbitrary right side values. Therefore a necessary condition of
poisedness is $ s = N.$ If this condition holds then we obtain from
the linear system

\begin{proposition}\label{prp:int0}
A set of nodes $\Xset_N$ is $n$-poised if and only if
\begin{equation*}
 p\in\Pi_n\ \ \text{and}\ \ p\big\vert_{\Xset_N} = 0
 \quad\implies\quad p = 0.
\end{equation*}
\end{proposition}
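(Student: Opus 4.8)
The plan is to reduce the assertion to the elementary linear-algebra fact that a square linear system has a unique solution for every right-hand side exactly when the associated homogeneous system has only the trivial solution. First I would fix a basis $\phi_1,\dots,\phi_N$ of $\Pi_n$ — for concreteness the monomials $x^iy^j$ with $i+j\le n$ — and write an arbitrary $p\in\Pi_n$ as $p=\sum_{j=1}^N a_j\phi_j$ with coordinate vector $\mathbf a=(a_1,\dots,a_N)\in\mathbb R^N$. Evaluating at the nodes of $\Xset_N$ turns the interpolation conditions \eqref{eq:intpr} into the linear system $M\mathbf a=\mathbf c$, where $M$ is the $N\times N$ matrix with entries $M_{ij}=\phi_j(x_i,y_i)$ and $\mathbf c=(c_1,\dots,c_N)$. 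Since $p(x_i,y_i)=\sum_{j=1}^N a_j\phi_j(x_i,y_i)=(M\mathbf a)_i$, the coordinate correspondence $p\leftrightarrow\mathbf a$ is a linear bijection between $\Pi_n$ and $\mathbb R^N$ under which the condition $p\big\vert_{\Xset_N}=0$ corresponds exactly to $M\mathbf a=\mathbf 0$.

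Next I would translate both sides of the desired equivalence into statements about the square matrix $M$. By Definition~\ref{poised}, $\Xset_N$ is $n$-poised if and only if for every $\mathbf c\in\mathbb R^N$ the system $M\mathbf a=\mathbf c$ has a unique solution, i.e. if and only if $M$ is nonsingular. On the other side, the condition ``$p\in\Pi_n$ and $p\big\vert_{\Xset_N}=0$ imply $p=0$'' translates, via the bijection above, into ``$M\mathbf a=\mathbf 0$ implies $\mathbf a=\mathbf 0$'', that is, into $M$ having trivial kernel. For a square matrix these two properties coincide, so the proposition is proved.

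I expect no genuine obstacle here — indeed the text already signals that the result is to be read off from the linear system. The only point needing a word of justification is the standard chain of equivalences for a square matrix $M$: invertibility of $M$; unique solvability of $M\mathbf a=\mathbf c$ for all $\mathbf c$; solvability of $M\mathbf a=\mathbf c$ for all $\mathbf c$; and triviality of the solution set of $M\mathbf a=\mathbf 0$. Equivalently, one may argue coordinate-free: the evaluation map $E\colon\Pi_n\to\mathbb R^N$, $E(p)=(p(x_i,y_i))_{i=1}^N$, is linear between vector spaces of the same finite dimension $N$, hence bijective if and only if injective; bijectivity of $E$ is exactly $n$-poisedness and injectivity of $E$ is exactly the stated implication.
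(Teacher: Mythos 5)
Your proof is correct and follows exactly the route the paper intends: the paper derives the proposition directly from the $N\times N$ linear system described after Definition~\ref{poised}, and your argument simply spells out the standard equivalence between unique solvability of $M\mathbf{a}=\mathbf{c}$ for all $\mathbf{c}$ and triviality of the kernel of the square matrix $M$ (equivalently, injectivity versus bijectivity of the evaluation map). No issues.
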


Thus, geometrically, the node set $\Xset_N$ is $n$-poised if and only if there is no curve of degree $n$ passing through all its nodes.

It is worth mentioning
\begin{proposition}\label{Ecurve}
For any set $\Xset_{N-1},$ i.e., set of cardinality $N-1,$ there is a curve of degree $n$ passing through all its nodes.
 \end{proposition}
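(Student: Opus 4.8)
The plan is to reduce this statement to an elementary dimension count. Recall that a curve of degree $n$ is (the zero set of) a nonzero polynomial in $\Pi_n$; hence it suffices to exhibit a nonzero $p\in\Pi_n$ that vanishes at every node of $\Xset_{N-1}$.

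First I would write $p=\sum_{i+j\le n}a_{ij}x^iy^j$ and regard the $N-1$ vanishing conditions $p(x_\ell,y_\ell)=0$, $\ell=1,\dots,N-1$, as a homogeneous linear system in the $N$ unknowns $a_{ij}$. Since the number of equations, $N-1$, is strictly smaller than the number of unknowns, $N=\dim\Pi_n$, the system possesses a nontrivial solution; the associated polynomial, call it $q$, is nonzero and satisfies $q\big\vert_{\Xset_{N-1}}=0$.

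It then remains to arrange that the degree is exactly $n$. Let $m:=\deg q\le n$. If $m=n$, the zero set of $q$ is already a curve of degree $n$ passing through all the nodes, and we are done. If $m<n$, fix any linear polynomial $\ell$ and replace $q$ by $q\cdot\ell^{\,n-m}\in\Pi_n$; this polynomial is still nonzero, still vanishes on $\Xset_{N-1}$ (its zero set contains that of $q$), and now has degree precisely $n$, so its zero set is the required curve.

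I do not expect any genuine obstacle: the argument is pure linear algebra together with the trivial observation that one may pad the degree with linear factors. The only point deserving a moment's care is the convention adopted for the phrase ``curve of degree $n$''; if ``degree $\le n$'' is understood, then even the padding step is superfluous and the proof consists solely of the dimension count of the second paragraph.
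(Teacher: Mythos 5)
Your proof is correct and is essentially the paper's argument: the authors likewise observe that vanishing at the $N-1$ nodes is a homogeneous linear system of $N-1$ equations in the $N$ coefficients of a polynomial in $\Pi_n$, hence has a nontrivial solution. The extra padding step with $\ell^{\,n-m}$ is a harmless refinement the paper does not bother with, since its conventions effectively treat ``curve of degree $n$'' as the zero set of a nonzero polynomial in $\Pi_n$.
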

Indeed, the existence of the curve reduces to a system of $N-1$ linear homogeneous equations with $N$ unknowns -- the coefficients of the polynomial of degree $n.$

It follows from Proposition \ref{prp:int0} also that a node set of cardinality $N$ is $n$-poised if and only if it is $n$-independent.

Suppose we have an $m$-poised set ${\mathcal X}_N.$
From what was said above we can conclude easily that through any $N-1$ nodes of $\mathcal X$ there pass a unique curve of degree $n.$
Namely the curve given by the fundamental polynomial of the missing node. While through any $N-2$ nodes of $\mathcal X$ there pass more than one curve of degree $n,$ for example the curves given by the fundamental polynomials of two missing nodes. Thus we have that the
minimal number of $n$-independent nodes determining uniquely the curve
of degree $n$ equals to $N-1.$

In \cite{HT} we considered this problem in the case of arbitrary degree $k, k\le n.$ We proved that the
minimal number of $n$-independent nodes determining uniquely the curve
of degree $k\le n$ equals to $\Kset:=(1/2)(k-1)(2n+4-k)+2.$ Or,
more precisely, for any $n$-independent set of cardinality $\Kset$ there is at most one curve of degree $k\le n$
passing through its nodes,
while there are $n$-independent
node sets of cardinality $\Kset-1$ through which pass at least
two such curves. Let us mention that the above described problem in the case $k=n-1$ was solved in \cite{BHT}.

In this paper we bring a simple characterization of the sets of cardinality  $\Kset-1$ through which pass at least
two curves of degree $k.$
Namely, we prove that in this case all the nodes of $\Xset$ but one belong to a curve of degree $k-1.$ Moreover, this
latter curve is a maximal curve meaning that it passes through maximal possible number of $n$-independent nodes 
(see Section 3).

At the end let us bring a well-known Berzolari-Radon construction of $n$-poised set (see \cite{B}, \cite{R}).
\begin{definition}\label{BR}
A set of $N=1+\cdots+(n+1)$ nodes is called Berzolari-Radon set for
degree $n$, or briefly $BR_n$ set, if there exist lines
$l_{1},l_{2},\ldots,l_{n+1}$, such that the sets $l_{1},\
l_2\setminus l_{1},\ l_{3}\setminus (l_{1}\cup l_{2}),\ldots,\
l_{n+1}\setminus(l_{1}\cup\cdots\cup l_{n})$ contain exactly
$(n+1),n,n-1,\ldots,1$ nodes, respectively.
\end{definition}

\section{ Some properties of $n$-independent nodes}

Let us start with the following simple (see \cite{HMush}, Lemma 2.3)
\begin{lemma}\label{XA}
Suppose that a node set ${\mathcal X}$ is $n$-independent and a node $A\notin \mathcal X$
has $n$-fundamental polynomial with
respect to the set ${\mathcal X}\cup \{A\}.$ Then the latter
node set is $n$-independent, too.
\end{lemma}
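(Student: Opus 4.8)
The plan is to verify $n$-independence of $\Xset\cup\{A\}$ directly from the definition, namely by exhibiting an $n$-fundamental polynomial for each of its nodes. For the node $A$ this is immediate: by hypothesis $A$ already has an $n$-fundamental polynomial $p_A^\star\in\Pi_n$ with respect to $\Xset\cup\{A\}$, so $p_A^\star(A)=1$ and $p_A^\star\big\vert_{\Xset}=0$. Hence the entire task reduces to producing, for each node $B\in\Xset$, a polynomial in $\Pi_n$ that equals $1$ at $B$ and vanishes at all the remaining nodes of $\Xset\cup\{A\}$, the new constraint being that it must now also vanish at $A$.

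First I would invoke the $n$-independence of $\Xset$ to fix, for each $B\in\Xset$, its $n$-fundamental polynomial $p_B^\star\in\Pi_n$ relative to $\Xset$, so that $p_B^\star(B)=1$ and $p_B^\star$ vanishes at every other node of $\Xset$. The only reason $p_B^\star$ may fail to be fundamental for the enlarged set is that the value $p_B^\star(A)$ need not be zero. The key step is to remove this single defect by a one-term correction using $p_A^\star$: I set
\[
 q_B := p_B^\star - p_B^\star(A)\,p_A^\star .
\]
Since $q_B$ is a linear combination of two polynomials of degree $\le n$, we have $q_B\in\Pi_n$, which is exactly what keeps the construction inside the right space.

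It then remains to check the interpolation values of $q_B$ at the nodes of $\Xset\cup\{A\}$. Because $p_A^\star$ vanishes on all of $\Xset$, the correction term does not disturb the values of $p_B^\star$ at any node of $\Xset$; thus $q_B(B)=1$ and $q_B$ vanishes at the other nodes of $\Xset$. At $A$ we compute $q_B(A)=p_B^\star(A)-p_B^\star(A)\cdot 1=0$, so the defect is exactly cancelled. Hence $q_B$ is the sought $n$-fundamental polynomial for $B$ with respect to $\Xset\cup\{A\}$. Having supplied such a polynomial for every node of $\Xset\cup\{A\}$, I conclude that this set is $n$-independent.

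There is no genuine obstacle in this argument: the whole content is the realization that a single subtraction of $p_B^\star(A)\,p_A^\star$ simultaneously preserves all the old vanishing conditions (by $p_A^\star\big\vert_{\Xset}=0$) and enforces the new one at $A$ (by $p_A^\star(A)=1$), while linearity of $\Pi_n$ guarantees the result still has degree $\le n$. The conceptual point to highlight is merely that the hypothesis on $A$ is precisely the separation property needed to make this correction available.
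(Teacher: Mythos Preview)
Your proof is correct and follows exactly the approach indicated in the paper: the paper merely remarks that the fundamental polynomial of any $B\in\Xset$ with respect to $\Xset\cup\{A\}$ is obtained from $p_B^\star$ and $p_A^\star$, and your explicit formula $q_B=p_B^\star-p_B^\star(A)\,p_A^\star$ is precisely the intended construction, carried out in full detail.
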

Indeed, one can get readily the fundamental polynomial of any node $B\in \Xset$ with respect to the set $\Yset:={\mathcal X}\cup \{A\}$ by using
the given fundamental polynomial $p^\star_A$ and the fundamental polynomial of $B$ with respect to the set ${\mathcal X}.$

Evidently, any subset of $n$-poised set is $n$-independent.
According to the next lemma any $n$-independent set is a subset of
some $n$-poised set (see, e.g. \cite{HJZ}, Lemma 2.1):
\begin{lemma}\label{ext}
Any $n$-independent set $\Xset$ with $\#\Xset<N$ can be enlarged to an $n$-poised set.
\end{lemma}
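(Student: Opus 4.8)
The plan is to induct on the \emph{deficiency} $N-\#\Xset$, enlarging $\Xset$ one node at a time with Lemma \ref{XA} serving as the engine that preserves $n$-independence at each step. If $\#\Xset=N$ there is nothing to prove, since an $n$-independent set of cardinality $N$ is already $n$-poised, as noted right after Proposition \ref{prp:int0}. So suppose $\#\Xset=s<N$, and suppose for the moment that we can exhibit a node $A\notin\Xset$ possessing an $n$-fundamental polynomial with respect to $\Xset\cup\{A\}$. Then Lemma \ref{XA} guarantees that $\Xset\cup\{A\}$ is again $n$-independent, now of cardinality $s+1$; iterating this $N-s$ times yields an $n$-independent set of cardinality $N$, which is the desired $n$-poised extension. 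Thus the entire argument reduces to a single enlargement step.

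To carry out that step, given an $n$-independent $\Xset$ with $s<N$ I would first observe that $n$-independence forces the $s$ evaluation functionals $p\mapsto p(x_i,y_i)$ to be linearly independent on $\Pi_n$: if $\sum_i\lambda_i\,p(x_i,y_i)=0$ for every $p\in\Pi_n$, then evaluating on each fundamental polynomial $p_j^\star$ gives $\lambda_j=0$. Consequently the subspace of polynomials vanishing on $\Xset$,
\[
  V:=\{\,p\in\Pi_n : p\big\vert_{\Xset}=0\,\},
\]
has dimension $N-s\ge 1$, so it contains some nonzero polynomial $p$.

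Since $p$ is not the zero polynomial, it cannot vanish everywhere, so there is a point $A$ with $p(A)\neq 0$; such an $A$ automatically lies outside $\Xset$, because $p$ vanishes on $\Xset$. Then $p/p(A)$ vanishes on $\Xset$ and equals $1$ at $A$, i.e.\ it is an $n$-fundamental polynomial for $A$ with respect to $\Xset\cup\{A\}$, which is exactly the hypothesis needed to apply Lemma \ref{XA}. This closes the enlargement step and hence the induction. I expect the only genuinely nonroutine point to be the passage from $V\neq\{0\}$ to the existence of a point off the zero set of $p$ — here one uses that a nonzero bivariate polynomial cannot vanish at every point of the ambient plane; the rest is the dimension count for $V$ together with the repeated invocation of Lemma \ref{XA}, with the deficiency dropping by exactly one at each stage so that the process terminates at cardinality $N$.
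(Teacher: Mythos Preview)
Your proof is correct and follows essentially the same route as the paper: reduce to a single enlargement step, produce a nonzero $p\in\Pi_n$ vanishing on $\Xset$, pick $A$ with $p(A)\neq 0$, and invoke Lemma~\ref{XA}. The only cosmetic difference is that the paper obtains the nonzero $p$ by citing Proposition~\ref{Ecurve} (the underdetermined homogeneous system), whereas you compute $\dim V=N-s$ via linear independence of the evaluation functionals; either way one only needs $\dim V\ge 1$, and the rest is identical.
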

\begin{proof}
It suffices to show that there is a node $A$ such that the set ${\mathcal X}\cup \{A\}$
is $n$-independent. By Proposition \ref{Ecurve} there is a nonzero polynomial $q\in\Pi_n$ such that $q\big\vert_{\Xset} = 0.$
Now, in view of Lemma \ref{XA}, we may choose a desirable node $A$ by requiring only that $q(A)\ne 0.$ Indeed, then $q$ is a fundamental polynomial
of $A$ with respect to the set ${\mathcal X}\cup \{A\}.$
 \end{proof}

Denote the linear space of polynomials of total degree at most $n$
vanishing on ${\mathcal X}$ by
\begin{equation*}{{\mathcal P}}_{n,{\mathcal X}}=\left\{p\in \Pi_n
: p\big\vert_{\mathcal X}=0\right\}.
\end{equation*}
The following is well-known (see e.g. \cite{HM})
\vspace{5mm}

\begin{proposition} \label{PnX} For any node set ${\mathcal X}$ we have that
\begin{equation*}\label{eq:theta1} \dim {{\mathcal P}}_{n,{\mathcal X}} \ge N - \#{\mathcal X}.\end{equation*}
Moreover, equality takes place here if and only if the set ${\mathcal X}$ is $n$-independent.
\end{proposition}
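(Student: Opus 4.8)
The plan is to encode all of the vanishing conditions in a single linear map and then read off everything from the rank--nullity theorem. Write $\Xset=\Xset_s$ with $s=\#\Xset$, and introduce the sampling (evaluation) operator
\[
 L\colon \Pi_n\longrightarrow \mathbb{R}^{s},\qquad L(p)=\bigl(p(x_1,y_1),\dots,p(x_s,y_s)\bigr).
\]
This map is linear, and by the very definition of $\mathcal{P}_{n,\Xset}$ its kernel is $\ker L=\mathcal{P}_{n,\Xset}$.

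First I would apply rank--nullity to $L$: since $\dim\Pi_n=N$, we get $\dim\mathcal{P}_{n,\Xset}=N-\operatorname{rank}L$. As $\operatorname{Im}L\subseteq\mathbb{R}^{s}$, we have $\operatorname{rank}L\le s=\#\Xset$, and therefore $\dim\mathcal{P}_{n,\Xset}\ge N-\#\Xset$, which is the claimed inequality.

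Next, for the ``moreover'' part: equality holds in the inequality exactly when $\operatorname{rank}L=\#\Xset$, i.e.\ exactly when $L$ is surjective, so it remains to recognize surjectivity of $L$ as $n$-independence of $\Xset$. If $\Xset$ is $n$-independent, then every node $(x_i,y_i)$ has an $n$-fundamental polynomial $p_i^\star\in\Pi_n$, and $L(p_i^\star)$ is the $i$-th standard basis vector of $\mathbb{R}^{s}$; since these span $\mathbb{R}^{s}$, the operator $L$ is onto. Conversely, if $L$ is onto then each standard basis vector lies in $\operatorname{Im}L$, and that is precisely the assertion that every node of $\Xset$ has an $n$-fundamental polynomial, i.e.\ that $\Xset$ is $n$-independent. (Equivalently one can simply note that surjectivity of $L$ means solvability of the interpolation problem \eqref{eq:intpr} for arbitrary data, which was already identified with $n$-independence in the Introduction via the Lagrange formula.)

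I do not expect a genuine obstacle here: the argument is a one-line rank--nullity count plus the elementary bookkeeping of the last paragraph, whose only content is that ``each $e_i\in\operatorname{Im}L$'' is the same as ``$\operatorname{Im}L=\mathbb{R}^{s}$'' because $e_1,\dots,e_s$ is a basis of $\mathbb{R}^{s}$. (If one prefers not to invoke the ambient $\mathbb{R}^{s}$, the same reasoning runs on the dual side: $\mathcal{P}_{n,\Xset}$ is the common kernel of the point-evaluation functionals restricted to $\Pi_n$, so its codimension in $\Pi_n$ equals the dimension of the span of these functionals, which is at most $s$ and equals $s$ exactly when they are linearly independent --- again $n$-independence of $\Xset$.)
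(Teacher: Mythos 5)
Your proof is correct. The paper itself gives no proof of this proposition (it is cited as well-known, from \cite{HM}), and your rank--nullity argument for the evaluation operator $L\colon \Pi_n\to\mathbb{R}^s$ --- with $\ker L=\mathcal{P}_{n,\mathcal{X}}$ and surjectivity of $L$ identified with the existence of fundamental polynomials at every node --- is exactly the standard argument the authors are implicitly relying on, and it matches the equivalence between $n$-independence and solvability of the interpolation problem already noted in the Introduction.
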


From here one gets readily (see \cite{HMush}, Corollary 2.4):

\begin{corollary} \label{cor:ind4} Let ${\mathcal Y}$ be a maximal $n$-independent subset of ${\mathcal X},$ i.e.,  ${\mathcal Y}\subset{\mathcal X}$ is $n$-independent and
${\mathcal Y}\cup \{A\}$ is $n$-dependent for any $A\in {\mathcal
X}\setminus {\mathcal Y}.$ Then we have that
\begin{equation}\label{eq:theta2}{{\mathcal P}}_{n,{\mathcal Y}}= {{\mathcal P}}_{n,{\mathcal X}}.
\end{equation}
\end{corollary}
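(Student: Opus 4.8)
The plan is to prove the two inclusions in \eqref{eq:theta2} separately. The inclusion ${\mathcal P}_{n,{\mathcal X}}\subseteq {\mathcal P}_{n,{\mathcal Y}}$ is immediate, since ${\mathcal Y}\subseteq{\mathcal X}$ and a polynomial vanishing on all of ${\mathcal X}$ vanishes on any subset of it. The whole content lies in the reverse inclusion ${\mathcal P}_{n,{\mathcal Y}}\subseteq {\mathcal P}_{n,{\mathcal X}}$, and I would extract it from Proposition \ref{PnX} by a one-node-at-a-time dimension count. Specifically, I would fix a node $A\in{\mathcal X}\setminus{\mathcal Y}$ and show that every $p\in\Pi_n$ vanishing on ${\mathcal Y}$ already vanishes at $A$; ranging over all such $A$ then yields ${\mathcal P}_{n,{\mathcal Y}}={\mathcal P}_{n,{\mathcal X}}$.

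To handle a single node $A\in{\mathcal X}\setminus{\mathcal Y}$: since ${\mathcal Y}$ is $n$-independent, Proposition \ref{PnX} gives $\dim{\mathcal P}_{n,{\mathcal Y}}=N-\#{\mathcal Y}$. Since ${\mathcal Y}$ is a \emph{maximal} $n$-independent subset of ${\mathcal X}$, the set ${\mathcal Y}\cup\{A\}$ is $n$-dependent, so the inequality in Proposition \ref{PnX} for this set is strict; as $\#({\mathcal Y}\cup\{A\})=\#{\mathcal Y}+1$ and dimensions are integers, this forces $\dim{\mathcal P}_{n,{\mathcal Y}\cup\{A\}}\ge N-\#{\mathcal Y}=\dim{\mathcal P}_{n,{\mathcal Y}}$. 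Combined with the trivial inclusion ${\mathcal P}_{n,{\mathcal Y}\cup\{A\}}\subseteq{\mathcal P}_{n,{\mathcal Y}}$, this gives ${\mathcal P}_{n,{\mathcal Y}\cup\{A\}}={\mathcal P}_{n,{\mathcal Y}}$, which says exactly that every polynomial of ${\mathcal P}_{n,{\mathcal Y}}$ vanishes at $A$. Doing this for each $A\in{\mathcal X}\setminus{\mathcal Y}$ completes the proof.

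Alternatively one can bypass dimension counts using Lemma \ref{XA}: if some $p\in{\mathcal P}_{n,{\mathcal Y}}$ satisfied $p(A)\neq 0$ for an $A\in{\mathcal X}\setminus{\mathcal Y}$, then $p/p(A)$ would be an $n$-fundamental polynomial of $A$ with respect to ${\mathcal Y}\cup\{A\}$, so by Lemma \ref{XA} the set ${\mathcal Y}\cup\{A\}$ would be $n$-independent, contradicting the maximality of ${\mathcal Y}$. I do not expect a genuine obstacle here — the statement is a direct consequence of Proposition \ref{PnX} (or of Lemma \ref{XA}). The only point that requires care is that ${\mathcal X}$ itself need not be $n$-independent, so Proposition \ref{PnX} cannot be applied to ${\mathcal X}$ with equality; the argument must be carried out at the level of ${\mathcal Y}$ and of its one-node extensions ${\mathcal Y}\cup\{A\}$.
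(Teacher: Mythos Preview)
Your proposal is correct. In fact, your ``alternative'' via Lemma~\ref{XA} is exactly the paper's proof: it notes $\mathcal{P}_{n,\mathcal{X}}\subset\mathcal{P}_{n,\mathcal{Y}}$ from $\mathcal{Y}\subset\mathcal{X}$, and then for any $A\in\mathcal{X}$ uses that $\mathcal{Y}\cup\{A\}$ is $n$-dependent together with Lemma~\ref{XA} to conclude $p(A)=0$ for every $p\in\mathcal{P}_{n,\mathcal{Y}}$. Your primary dimension-count argument via Proposition~\ref{PnX} is a valid variant that reaches the same one-node conclusion $\mathcal{P}_{n,\mathcal{Y}\cup\{A\}}=\mathcal{P}_{n,\mathcal{Y}}$ by comparing dimensions instead of invoking Lemma~\ref{XA}; it is slightly less direct but entirely sound, and your caveat that Proposition~\ref{PnX} cannot be applied with equality to $\mathcal{X}$ itself is well taken.
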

\begin{proof} We have that ${{\mathcal P}}_{n,{\mathcal
X}}\subset {{\mathcal P}}_{n,{\mathcal Y}},$ since ${\mathcal
Y}\subset {\mathcal X}.$ Now, suppose that $p\in \Pi_n,\
p\big\vert_{\mathcal Y}=0$ and $A$ is any node of ${\mathcal X}.$
Then ${\mathcal Y}\cup\{A\}$ is dependent and therefore, in view of
Lemma \ref{XA}, $p\big\vert_{A}=0.$
\end{proof}

\noindent From \eqref{eq:theta2} and Proposition \ref{PnX}
(part "moreover") we have that
\begin{equation} \label{eq:theta3} \dim {{\mathcal P}}_{n,{\mathcal X}} = N - \#{\mathcal Y},\end{equation}
where ${\mathcal Y}$ is any maximal $n$-independent subset of
${\mathcal X}.$ Thus, all the maximal $n$-independent subsets of
${\mathcal X}$ have the same cardinality, which is denoted by
${\mathcal H}_n ({\mathcal X})\ -$ \emph{the Hilbert $n$-function}
of ${\mathcal X}.$ Hence, according to \eqref{eq:theta3}, we have that
\begin{equation*} \dim {{\mathcal P}}_{n,{\mathcal X}} = N - {\mathcal H}_n ({\mathcal
X}).\end{equation*}

\section{Maximal curves}

An algebraic curve in the plane is the zero set of some bivariate
polynomial of degree at least $1.$ We use the same letter, say $p,$
to denote the polynomial $p\in\Pi_k\setminus \Pi_{k-1}$ and the
corresponding curve  $p$ of degree $k$ defined by equation
$p(x,y)=0.$

According to the following well-known statement there are no more
than $n+1\ n$-independent points in any line:
\begin{proposition}\label{maxline}
Assume that $l$ is a line and ${\mathcal X}_{n+1}$ is any subset of
$l$ containing $n+1$ points. Then we have that
$$p\in {\Pi_{n}}\quad \text{and} \quad p|_{{\mathcal X}_{n+1}}= 0 \; \Longrightarrow \quad p = lr,$$
\end{proposition}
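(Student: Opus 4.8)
The plan is to reduce, via an affine change of variables, to the case where the line $l$ is a coordinate axis, and then invoke the one-variable fact that a polynomial of degree at most $n$ with $n+1$ distinct roots vanishes identically. Concretely, I would first apply an invertible affine map $(x,y)\mapsto(u,v)$ carrying $l$ onto the line $v=0$. Such a map induces a linear automorphism of $\Pi_n$ that preserves total degree and respects products — in particular divisibility by a linear polynomial — and it sends the defining equation of $l$ to a nonzero scalar multiple of $v$. Hence it suffices to treat the case $l(x,y)=y$ (renaming the variables back to $x,y$), with the $n+1$ nodes now of the form $(t_1,0),\dots,(t_{n+1},0)$ where the $t_i$ are pairwise distinct, since distinct points on the $x$-axis have distinct first coordinates.

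Next I would expand $p\in\Pi_n$ in powers of $y$,
\[
 p(x,y)=\sum_{j=0}^{n} y^{j} q_j(x),\qquad \deg q_j\le n-j,
\]
which is legitimate precisely because $p$ has total degree at most $n$. Evaluating at the nodes gives $q_0(t_i)=p(t_i,0)=0$ for $i=1,\dots,n+1$; as $\deg q_0\le n$, a nonzero polynomial with these $n+1$ distinct roots is impossible, so $q_0\equiv 0$. Consequently
\[
 p(x,y)=y\sum_{j=1}^{n} y^{j-1} q_j(x)=y\,r(x,y),\qquad r\in\Pi_{n-1},
\]
i.e.\ $p=lr$ in the new coordinates; transporting this back through the affine map yields $p=lr$ in the original coordinates, with $r$ of degree at most $n-1$.

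I do not anticipate a real obstacle here: the only points needing care are the degree bookkeeping in the expansion in powers of $y$ and the (standard) fact that an affine change of variables preserves total degree and divisibility. An alternative route avoiding the coordinate change is to parametrize $l$ linearly, substitute the parametrization into $p$ to obtain a univariate polynomial of degree at most $n$ that vanishes at the $n+1$ parameter values of the nodes, conclude it is identically zero so that $p$ vanishes on all of $l$, and then divide $p$ by $l$ in the variable in which $l$ is monic up to a nonzero constant, observing that the remainder — a polynomial in the remaining variable alone — must vanish on $l$ and hence be zero.
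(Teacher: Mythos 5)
The paper states this proposition as well-known and gives no proof of its own, so there is nothing to compare against; your argument is correct and is the standard one. The affine reduction to $l=\{y=0\}$, the expansion $p(x,y)=\sum_{j=0}^{n}y^{j}q_j(x)$ with $\deg q_j\le n-j$, and the conclusion $q_0\equiv 0$ from $n+1$ distinct roots are all in order, and the degree count $\deg\bigl(y^{j-1}q_j\bigr)\le n-1$ correctly places $r$ in $\Pi_{n-1}$.
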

\noindent where $r \in \Pi_{n-1}.$

\noindent Denote
\begin{equation*} d:=d(n, k) := N_n - N_{n-k} = k(2n+3-k)/2.
\end{equation*}
The following is a generalization of Proposition \ref{maxline}.
\begin{proposition}[\cite{Raf}, Prop. 3.1]\label{maxcurve}
Let $q$ be an algebraic curve of degree $k \le n$ without multiple
components. Then the following hold.\\
i) Any subset of $q$ containing more than $d(n,k)$ nodes is
$n$-dependent.\\
ii) Any subset ${\mathcal X}_d$ of $q$ containing exactly $d(n,k)$
nodes is $n$-independent if and only if the following condition
holds:
        $$p\in {\Pi_{n}}\quad \text{and} \quad p|_{{\mathcal X}_d} = 0 \Longrightarrow  p = qr, $$
\end{proposition}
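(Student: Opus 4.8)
The plan is to reduce both parts to a single linear-algebra fact about the multiplication-by-$q$ map and then invoke Proposition \ref{PnX}. I would introduce the linear map
\[
M_q:\Pi_{n-k}\to\Pi_n,\qquad M_q(r)=qr.
\]
Since $q$ is a curve of degree $k\ge 1$ it is a nonzero polynomial, and because the bivariate polynomial ring is an integral domain, $M_q$ is injective. Hence its image $q\Pi_{n-k}:=\{qr:r\in\Pi_{n-k}\}$ is a subspace of $\Pi_n$ of dimension $N_{n-k}=N_n-d$, i.e.\ of codimension $d$. Moreover, for every $r\in\Pi_{n-k}$ the product $qr$ has degree $\le n$ and vanishes on the whole curve $q$, hence on any subset $\mathcal{X}\subset q$; therefore we always have the inclusion $q\Pi_{n-k}\subseteq \mathcal{P}_{n,\mathcal{X}}$. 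This inclusion, together with the dimension count, is the engine of the whole argument.

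For part (i) I would argue by contraposition. Suppose $\mathcal{X}\subset q$ is $n$-independent. By the equality case of Proposition \ref{PnX} we have $\dim \mathcal{P}_{n,\mathcal{X}}=N_n-\#\mathcal{X}$, while the inclusion above gives $\dim \mathcal{P}_{n,\mathcal{X}}\ge \dim q\Pi_{n-k}=N_n-d$. Combining the two yields $\#\mathcal{X}\le d$. Equivalently, any subset of $q$ with more than $d$ nodes fails to be $n$-independent, which is exactly statement (i).

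For part (ii) I would first translate the divisibility condition into an inclusion of polynomial spaces. A polynomial $p\in\Pi_n$ with $p|_{\mathcal{X}_d}=0$ factors as $p=qr$ precisely when $p\in q\Pi_{n-k}$, since the degree bookkeeping $\deg p\le n$ forces $\deg r\le n-k$; thus the stated condition is equivalent to $\mathcal{P}_{n,\mathcal{X}_d}\subseteq q\Pi_{n-k}$. As the reverse inclusion always holds, the condition is in fact equivalent to the equality $\mathcal{P}_{n,\mathcal{X}_d}=q\Pi_{n-k}$. Now $\#\mathcal{X}_d=d$, so $\dim q\Pi_{n-k}=N_n-d=N_n-\#\mathcal{X}_d$. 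If $\mathcal{X}_d$ is $n$-independent, Proposition \ref{PnX} forces $\dim \mathcal{P}_{n,\mathcal{X}_d}=N_n-d=\dim q\Pi_{n-k}$, and a nested pair of subspaces of equal finite dimension must coincide, giving the condition. Conversely, if the condition, hence the equality of spaces, holds, then $\dim \mathcal{P}_{n,\mathcal{X}_d}=N_n-d=N_n-\#\mathcal{X}_d$, which by the equality case of Proposition \ref{PnX} means $\mathcal{X}_d$ is $n$-independent. This closes the equivalence.

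The argument is short, so there is no deep obstacle; the only points requiring care are the injectivity of $M_q$, which rests solely on $q$ being a nonzero polynomial (so the hypothesis ``without multiple components'' is not actually used here and is merely inherited from the source), and the degree bookkeeping that identifies $\{p\in\Pi_n:p=qr\}$ with $q\Pi_{n-k}$. I would also check at the outset the arithmetic identity $N_n-N_{n-k}=k(2n+3-k)/2$ so that $d$ has the stated closed form, and note that $k\le n$ guarantees $n-k\ge 0$, so that $\Pi_{n-k}$ and the count $N_{n-k}$ are meaningful.
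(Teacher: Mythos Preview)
The paper does not actually prove this proposition; it is quoted from \cite{Raf} without argument, so there is no ``paper's own proof'' to compare against. Your proof is correct and self-contained: the key move of identifying the divisibility condition with the equality $\mathcal{P}_{n,\mathcal{X}_d}=q\Pi_{n-k}$, and then reading off both directions from the dimension formula in Proposition~\ref{PnX}, is clean and complete. Your remark that the hypothesis ``without multiple components'' is not invoked by this linear-algebra argument is accurate; that hypothesis matters only if one wants the bound $d(n,k)$ to be \emph{attainable} (when $q$ has a repeated factor its zero set coincides with that of a lower-degree polynomial, so no $n$-independent subset of size $d(n,k)$ exists on it), but it plays no role in the implications you actually prove.
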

\noindent where $r \in \Pi_{n-k}.$

Suppose that $\mathcal X$ is an $n$-poised set of nodes and $q$ is
an algebraic curve of degree $k \le n$. Then of course any subset of
$\mathcal X$ is $n$-independent too. Therefore, according to
Proposition~\ref{maxcurve} i), at most $d(n,k)$ nodes of $\mathcal
X$ can lie in the curve $q$. \noindent Let us mention that a special
case of this when $q$ is a set of $k$ lines is proved in
\cite{CG2001b}.

This motivates the following definition (see \cite{Raf}, Def. 3.1).
\begin{definition}\label{def:maximal}
Given an $n$-independent set of nodes $\mathcal X_s,$ with $s\ge
d(n,k).$ A curve of degree $k \le n$ passing through $d(n,k)$ points
of $\mathcal X_s,$ is called maximal.
\end{definition}

\noindent Note that maximal line, as a line passing through $n+1$
nodes, is defined in \cite{CG2001}.

We say that a node $A\in \Xset$ uses a polynomial $q\in \Pi_k$ if the latter divides the fundamental polynomial
$p=p^\star_{A},$ i.e., $p = qr,$
for some $r \in \Pi_{n-k}.$

Next, we bring a characterization of maximal curves:

\begin{proposition} [\cite{Raf}, Prop. 3.3] \label{maxcor}  Let a node set $\Xset$ be n-poised. Then
a polynomial $\mu$ of degree $k,\ k\le n,$ is a maximal curve if and only if it is
used by any node in  $\Xset\setminus \mu.$
\end{proposition}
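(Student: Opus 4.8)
The plan is to prove the equivalence in Proposition~\ref{maxcor} by working with the dimension count for the space $\Pi_{n-k}$ restricted by the nodes that lie off the curve $\mu$, combined with Proposition~\ref{maxcurve}. First I would fix an $n$-poised set $\Xset$ (so $\#\Xset = N_n$) and a polynomial $\mu$ of degree $k$. Write $\Yset := \Xset \cap \mu$ and $\Zset := \Xset \setminus \mu$; then $\#\Zset = N_n - \#\Yset$. Since $\Xset$ is $n$-poised, every node $A\in\Zset$ has a fundamental polynomial $p^\star_A$ vanishing on $\Yset$, hence vanishing on $\#\Yset$ nodes of the curve $\mu$; and $A$ ``uses'' $\mu$ precisely when $p^\star_A = \mu r_A$ for some $r_A\in\Pi_{n-k}$.

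For the ``only if'' direction, suppose $\mu$ is a maximal curve, i.e.\ $\#\Yset = d(n,k)$. I would first argue that we may assume $\mu$ has no multiple components: if $\mu = \mu_0 \mu_1^2\cdots$, the $d(n,k)$ nodes of $\Xset$ on $\mu$, being $n$-independent, must actually lie on the reduced curve $\mu_{\mathrm{red}}$ of degree $k_{\mathrm{red}}\le k$, but then $d(n,k_{\mathrm{red}}) < d(n,k)$ would be exceeded, contradicting Proposition~\ref{maxcurve}~i) applied to $\mu_{\mathrm{red}}$ — so $\mu$ must already be reduced of degree exactly $k$. Now fix any $A\in\Zset$ and apply Proposition~\ref{maxcurve}~ii) to the curve $q=\mu$ and the node set $\Yset$, which has exactly $d(n,k)$ nodes and is $n$-independent (being a subset of an $n$-poised set): the ``if'' clause of ii) requires checking that $\Yset$ is $n$-independent, which we have, so the conclusion of ii) holds, namely any $p\in\Pi_n$ with $p|_{\Yset}=0$ factors as $p=\mu r$ with $r\in\Pi_{n-k}$. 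Since $p^\star_A$ vanishes on all of $\Xset\setminus\{A\}\supset\Yset$, we get $p^\star_A = \mu r_A$, so $A$ uses $\mu$. As $A\in\Zset=\Xset\setminus\mu$ was arbitrary, every node off $\mu$ uses $\mu$.

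For the ``if'' direction, suppose every node $A\in\Xset\setminus\mu$ uses $\mu$. Let $A$ be such a node; then $p^\star_A = \mu r_A$, and since $p^\star_A\not\equiv 0$ we have $\mu\not\equiv 0$ and $\mu$ vanishes at every node of $\Xset\setminus\{A\}$ that is not forced to be a zero of $r_A$. More cleanly: $p^\star_A(B)=0$ for all $B\in\Xset\setminus\{A\}$ while $p^\star_A(A)\ne 0$, so $\mu(A)\ne 0$, i.e.\ $A\notin\mu$ as expected, and for each other node $B\in\Zset\setminus\{A\}$ the value $\mu(B)$ need not vanish — but here I instead count: the curve $\mu$ contains $\Yset$, so $\#(\Xset\cap\mu)\ge\#\Yset$. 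To get the reverse inequality $\#\Yset \ge d(n,k)$, I would suppose for contradiction that $\#\Yset < d(n,k)$ and derive that some node off $\mu$ cannot use $\mu$: if $A\in\Zset$ uses $\mu$ then $p^\star_A=\mu r_A$ with $r_A\in\Pi_{n-k}$ vanishing on $\Zset\setminus\{A\}$, so $\Zset\setminus\{A\}$ is contained in the curve $r_A$ of degree $\le n-k$; but $\#(\Zset\setminus\{A\}) = N_n - \#\Yset - 1 > N_n - d(n,k) - 1 = N_{n-k} - 1$, and $N_{n-k}-1$ $n$-independent (indeed $(n-k)$-independent, being a subset of... ) nodes is the most that a degree-$(n-k)$ curve... — here I need that a nonzero $r_A\in\Pi_{n-k}$ cannot vanish on more than $N_{n-k}-1$ nodes of an $n$-poised set, which follows since those nodes would otherwise be $(n-k)$-dependent... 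The hard part will be making this last counting step rigorous: precisely, I expect the main obstacle is to show that if every node of $\Zset$ uses $\mu$ then the degree-$k$ part of $\mu$ is ``efficient'', i.e.\ the factorizations $p^\star_A = \mu r_A$ cannot all hold unless $\mu$ already passes through the full $d(n,k)$ nodes; this is exactly where one must invoke the dimension identity $\dim\Pi_{n-k} = N_{n-k} = N_n - d(n,k)$ together with $n$-independence of $\Xset$ to conclude that the $r_A$'s, as they range over $A\in\Zset$, span a space of polynomials in $\Pi_{n-k}$ of dimension $\#\Zset$, forcing $\#\Zset \le N_{n-k}$, i.e.\ $\#\Yset \ge d(n,k)$, and then equality by Proposition~\ref{maxcurve}~i). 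I would double-check the edge case $k=n$ separately, where $\Pi_{n-k}=\Pi_0$ consists of constants and ``maximal curve'' means $\mu$ passes through $N_n-1$ nodes.
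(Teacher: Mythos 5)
The paper does not actually prove this proposition: it only remarks that one direction follows from Proposition~\ref{maxcurve}~(ii) and otherwise defers to \cite{Raf}. Your ``only if'' direction is precisely that remark made explicit: $\Yset:=\Xset\cap\mu$ is an $n$-independent set of exactly $d(n,k)$ nodes on $\mu$, so Proposition~\ref{maxcurve}~(ii) forces every $p\in\Pi_n$ vanishing on $\Yset$ --- in particular $p^\star_A$ for $A\in\Xset\setminus\mu$ --- to be divisible by $\mu$. Your preliminary reduction to the case where $\mu$ has no multiple components is a genuine point the paper glosses over, and your argument for it (the nodes would lie on the reduced curve of strictly smaller degree, violating Proposition~\ref{maxcurve}~(i)) is correct. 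Your ``if'' direction is also essentially sound and is the standard route: writing $p^\star_A=\mu\,r_A$ for each $A\in\Zset:=\Xset\setminus\mu$, the polynomial $r_A\in\Pi_{n-k}$ vanishes on $\Zset\setminus\{A\}$ (because $\mu$ is nonzero there) while $r_A(A)\ne 0$, so the family $\{r_A\}_{A\in\Zset}$ consists of $(n-k)$-fundamental polynomials of $\Zset$ and is therefore linearly independent in $\Pi_{n-k}$; this gives $\#\Zset\le N_{n-k}$, hence $\#(\Xset\cap\mu)\ge N_n-N_{n-k}=d(n,k)$, and equality follows from Proposition~\ref{maxcurve}~(i).

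One local misstep should be excised from the ``if'' direction: the intermediate claim that a nonzero $r_A\in\Pi_{n-k}$ ``cannot vanish on more than $N_{n-k}-1$ nodes of an $n$-poised set'' is false. A subset of an $n$-poised set need not be $(n-k)$-independent (for instance, when $n-k=1$ a line can contain $n+1$ nodes of an $n$-poised set), so a single low-degree polynomial may well vanish at more than $N_{n-k}-1$ of its nodes. The correct mechanism is the one you land on at the end: it is the linear independence of the whole family $\{r_A\}_{A\in\Zset}$, as fundamental polynomials of $\Zset$ in $\Pi_{n-k}$, that bounds $\#\Zset$ by $\dim\Pi_{n-k}=N_{n-k}$. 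Keep that version and delete the per-polynomial vanishing bound, and the proof is complete.
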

Note that one side of this statement follows from Proposition \ref{maxcurve} (ii). In
the case of lines this was proved in \cite{CG2001}. For other properties of maximal
curves we refer reader to \cite{Raf}.

\begin{proposition} \label{extcurve} Assume that $\sigma$ is an algebraic curve  of degree $k,$ without multiple components, and
$\Xset_s\subset \sigma$ is any $n$-independent node set of cardinality
$s,\ s<d(n,k).$ Then the set $\Xset_s$ can be extended to a maximal
$n$-independent set $\Xset_{d}\subset \sigma$ of cardinality $d,$ where $d=d(n,k)$.
\end{proposition}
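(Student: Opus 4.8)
The plan is to build the extension one node at a time, using Lemma~\ref{XA} to preserve $n$-independence at each step. So suppose $\Xset_s\subset\sigma$ is $n$-independent with $s<d(n,k)$; I will show there is a node $A\in\sigma\setminus\Xset_s$ such that $\Xset_s\cup\{A\}$ is again $n$-independent, and then iterate until cardinality $d=d(n,k)$ is reached, at which point the resulting set is maximal by Definition~\ref{def:maximal} (it lies on $\sigma$ and has exactly $d$ nodes). Thus the whole statement reduces to the single-step extension claim.

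For the single step, first I would consider the linear space ${\mathcal P}_{n,\Xset_s}$. Since $\Xset_s$ is $n$-independent, Proposition~\ref{PnX} gives $\dim{\mathcal P}_{n,\Xset_s}=N-s$. The polynomials of the form $\sigma r$ with $r\in\Pi_{n-k}$ form a subspace of ${\mathcal P}_{n,\Xset_s}$ of dimension $N_{n-k}=N-d(n,k)$ (here I use that $\sigma$ has no multiple components, so $\sigma r=\sigma r'$ forces $r=r'$). Because $s<d(n,k)$, we have $N-s>N-d(n,k)$, so there exists a polynomial $p\in{\mathcal P}_{n,\Xset_s}$ that is \emph{not} divisible by $\sigma$. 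I would then argue that $p$ cannot vanish identically on the curve $\sigma$: if it did, then by the standard argument (Study's lemma / Bézout, applied componentwise since $\sigma$ has no multiple components) $\sigma$ would divide $p$, a contradiction. Hence there is a node $A\in\sigma$ with $p(A)\ne 0$, and after rescaling $p$ is an $n$-fundamental polynomial for $A$ with respect to $\Xset_s\cup\{A\}$; it vanishes on $\Xset_s$ and is nonzero at $A$. By Lemma~\ref{XA}, $\Xset_s\cup\{A\}$ is $n$-independent, and it is again a subset of $\sigma$.

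One small point needs care: after choosing $A$ I must know that $A\notin\Xset_s$, but that is automatic since $p$ vanishes on $\Xset_s$ while $p(A)\ne 0$. Also, when I iterate, at the step where the current set has cardinality $s'$ with $s\le s'<d(n,k)$ the same dimension count applies verbatim, so the induction goes through cleanly; after exactly $d(n,k)-s$ steps I obtain $\Xset_d\subset\sigma$ that is $n$-independent of cardinality $d=d(n,k)$, hence a maximal curve configuration on $\sigma$ in the sense of Definition~\ref{def:maximal}.

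I expect the main obstacle to be the claim that a degree-$n$ polynomial vanishing on all of an irreducible-component-free curve $\sigma$ of degree $k\le n$ must be divisible by $\sigma$. This is a classical fact, but it must be invoked (or cited — e.g. it is essentially the content behind Proposition~\ref{maxcurve}) rather than re-proved, and one has to be slightly careful that "vanishing on $\sigma$" means vanishing at every point of the real (or complex) zero set, and that the no-multiple-components hypothesis is exactly what lets us conclude divisibility rather than merely divisibility by the radical. Everything else — the dimension count and the one-node extension via Lemma~\ref{XA} — is routine.
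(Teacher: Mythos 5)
Your proof is correct and is essentially the paper's argument run in the contrapositive direction: the paper assumes no point of $\sigma$ can be added, deduces ${\mathcal P}_{n,\Xset_s}\subset\{q\sigma : q\in\Pi_{n-k}\}$, and gets the contradiction $s\ge d(n,k)$ from the same dimension count $N-s$ versus $N_{n-k}$, using the same Lemma~\ref{XA} and the same divisibility fact you flag. The only cosmetic difference is that you exhibit the non-divisible polynomial directly rather than arguing by contradiction.
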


\begin{proof}
It suffices to show that there is a point $A\in\sigma\setminus \Xset_s$ such
that the set $\Xset_{s+1}:=\Xset_s\cup \{A\}$ is $n$-independent.
Assume to the contrary that there is no such point, i.e., the set
$\Xset_{s+1}:=\Xset_s\cup \{A\}$ is $n$-dependent for any $A\in
\sigma.$  Then, in view of Lemma \ref{XA}, $A$ has no fundamental
polynomial with respect to the set $\Xset_{s+1}.$ In other words we
have
\begin{equation*}\label{Xy}
 p\in\Pi_n\ \ \text{and}\ \ p\big\vert_{{\mathcal X}_s} = 0
  \quad \implies \quad p(A)=0 \ \ \text{for any}\ \  A\in
\sigma.
\end{equation*}
From here we obtain that
$${\mathcal P}_{n,\Xset_s}\subset {\mathcal P}_{n,\sigma}:=\left\{{q\sigma : q\in \Pi_{n-k}}\right\}.$$
Now, in view of Proposition \ref{PnX}, we get from here
$$N-s=\dim{\mathcal P}_{n,\sigma}\le \dim{\mathcal P}_{n,\Xset_s}= N_{n-k}.$$
Therefore $s\ge d(n,k),$ which contradicts the hypothesis of
Proposition.\end{proof}

The following lemma follows readily from the fact that the
Vandermonde determinant, i.e., the main determinant of the linear
system described after Definition \ref{poised}, is a continuous
function of the nodes of $\Xset_N$ (see e.g., \cite{H82}, Remark 1.14).
\begin{lemma} \label{eps} Suppose $\Xset_N=\{(x_i,y_i)\}_{i=1}^N$ is $n$-poised. Then there is a positive number $\epsilon$ such that
any set $\Xset_N'=\{(x_i',y_i')\}_{i=1}^N,$ for which distance between $(x_i',y_i')$ and $(x_i,y_i)$ is less than $\epsilon,$ is $n$-poised too.
\end{lemma}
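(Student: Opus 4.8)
The plan is to reduce $n$-poisedness of a set of $N$ nodes to the non-vanishing of a single polynomial function of the $2N$ coordinates of the nodes, and then to invoke continuity.

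First I would make explicit the determinantal criterion lurking behind Proposition~\ref{prp:int0}. Fix an ordering $m_1,\dots,m_N$ of the monomials $x^iy^j$ with $i+j\le n$. To a configuration $Z=\big((x_1,y_1),\dots,(x_N,y_N)\big)\in\mathbb{R}^{2N}$ associate the $N\times N$ matrix $M(Z)$ whose $(l,t)$-entry is $m_t(x_l,y_l)$, and set $V(Z):=\det M(Z)$; this is exactly the Vandermonde determinant of the linear system described after Definition~\ref{poised}. A polynomial $p=\sum a_{ij}x^iy^j\in\Pi_n$ vanishes on the $N$ points of $Z$ precisely when the coefficient vector $(a_{ij})$ lies in $\ker M(Z)$, and, the matrix being square, $\ker M(Z)=\{0\}$ if and only if $V(Z)\ne0$. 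Hence, by Proposition~\ref{prp:int0}, a set of $N$ distinct nodes with configuration $Z$ is $n$-poised if and only if $V(Z)\ne0$.

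Next, since every entry of $M(Z)$ is a monomial in the $2N$ real variables $x_1,y_1,\dots,x_N,y_N$, the function $V$ is a polynomial in these variables and in particular is continuous on $\mathbb{R}^{2N}$. Let $Z_0$ be the configuration of the given $n$-poised set $\Xset_N$. By the previous step $V(Z_0)\ne0$, so by continuity there is a $\delta>0$ with $V(Z)\ne0$ for all $Z$ in the Euclidean ball $B(Z_0,\delta)$. I would then take $\epsilon:=\delta/\sqrt N$. If $\Xset_N'=\{(x_i',y_i')\}_{i=1}^N$ satisfies $\mathrm{dist}\big((x_i',y_i'),(x_i,y_i)\big)<\epsilon$ for every $i$, then its configuration $Z'$ obeys $\|Z'-Z_0\|^2=\sum_{i=1}^N\big[(x_i'-x_i)^2+(y_i'-y_i)^2\big]<N\epsilon^2=\delta^2$, so $Z'\in B(Z_0,\delta)$ and $V(Z')\ne0$. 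In particular the $N$ points of $\Xset_N'$ are pairwise distinct (a coincidence would give two equal rows of $M(Z')$ and hence $V(Z')=0$), so $\Xset_N'$ is a genuine set of $N$ nodes, and by the criterion of the first step it is $n$-poised.

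I do not anticipate a real obstacle --- as the paper remarks, the claim follows readily from continuity of the Vandermonde determinant. The only mildly delicate points are extracting the equivalence ``poised $\iff V\ne0$'' from Proposition~\ref{prp:int0} and converting the per-node bound $\epsilon$ into a single bound $\delta$ on $\|Z'-Z_0\|$ in the $2N$-dimensional parameter space (whence the harmless factor $\sqrt N$); the distinctness of the perturbed nodes comes for free from $V(Z')\ne0$.
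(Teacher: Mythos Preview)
Your argument is correct and is precisely the approach the paper indicates: the lemma is stated there without a formal proof, only with the remark that it ``follows readily from the fact that the Vandermonde determinant \ldots\ is a continuous function of the nodes,'' which is exactly what you carry out in detail. The extra care you take with the $\sqrt{N}$ factor and with distinctness of the perturbed nodes is not in the paper but is harmless bookkeeping.
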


Finally, let us bring a lemma that follows from a simple Linear
Algebra argument (see e.g., \cite{HJZ2}, Lemma 2.10).
\begin{lemma} \label{2cor} Suppose that two different curves of degree $k$ pass through all the nodes of $\Xset.$
Then for any node $A\notin \Xset$ there is a curve of degree $k$ passing through all the nodes of $\Xset$ and $A.$
\end{lemma}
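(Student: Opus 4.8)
The plan is to exploit the \emph{pencil} spanned by the two given curves and to reduce the existence of a curve through $\Xset\cup\{A\}$ to a single homogeneous linear equation in two unknowns. Denote by $p_1,p_2\in\Pi_k\setminus\Pi_{k-1}$ the polynomials defining the two different curves of degree $k$ passing through all nodes of $\Xset$. Since the two curves are different, the polynomials $p_1,p_2$ are not proportional and hence are linearly independent. Every polynomial in the two-dimensional linear space $V$ spanned by $p_1$ and $p_2$ vanishes on all of $\Xset$, because both $p_1$ and $p_2$ do.

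Next I would impose the extra condition of vanishing at the node $A\notin\Xset$. For an element $\lambda p_1+\mu p_2$ of $V$ this requirement reads
\[
\lambda\,p_1(A)+\mu\,p_2(A)=0,
\]
which is one homogeneous linear equation in the two unknowns $\lambda,\mu$ and therefore always possesses a nontrivial solution $(\lambda_0,\mu_0)\neq(0,0)$. Setting $q:=\lambda_0 p_1+\mu_0 p_2$, the linear independence of $p_1,p_2$ guarantees that $q$ is not identically zero; moreover $q\in\Pi_k$ and, by construction, $q$ vanishes on $\Xset\cup\{A\}$. Thus all nodes of $\Xset$ together with $A$ lie on the curve $q$. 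This is precisely the ``simple Linear Algebra argument'' promised in the statement.

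The one point requiring care — and the only genuine obstacle — is that $q$ be of degree \emph{exactly} $k$, as demanded by the convention of Section~3, rather than of some strictly lower degree. This can fail only when the degree-$k$ homogeneous leading forms of $p_1$ and $p_2$ are proportional, in which case $V$ contains a one-dimensional subspace of polynomials of degree $<k$ and the solution $(\lambda_0,\mu_0)$ may happen to fall into it. I would dispose of this as follows: if the obtained $q$ already has degree $k$ we are done; otherwise $q$ has some degree $m<k$, and I replace it by $qr$, where $r$ is any polynomial of degree $k-m$ (for instance a product of $k-m$ lines). Since $qr$ vanishes wherever $q$ does, it still vanishes on $\Xset\cup\{A\}$, while $\deg(qr)=k$; hence $qr$ is the required curve of degree $k$. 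Note that when the leading forms of $p_1$ and $p_2$ are linearly independent this correction is unnecessary, as then every nonzero element of $V$ automatically has degree $k$.
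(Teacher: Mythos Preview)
Your proof is correct and is precisely the ``simple Linear Algebra argument'' the paper alludes to (referring the reader to \cite{HJZ2}, Lemma~2.10, rather than spelling it out): take the pencil spanned by the two given polynomials and impose one linear condition to pass through $A$. Your extra care in the last paragraph, promoting a possible lower-degree $q$ to exact degree $k$ by multiplying with an auxiliary factor, is a legitimate way to match the paper's convention that a curve of degree $k$ is given by a polynomial in $\Pi_k\setminus\Pi_{k-1}$; the paper itself does not comment on this point.
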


\section{Main result}

In a previous paper \cite{HT} we determined the minimal number of
$n$-independent nodes that uniquely determine the curve of degree
$k, \ k\le n,$ passing through them:

\begin{theorem}\label{ht}
Assume that $\Xset$ is any set of $(d(n, k-1)+2) \ \ n$-independent
nodes lying in a curve of degree $k$ with $k\le n.$ Then the curve
is determined uniquely. Moreover, there is a set $\tilde\Xset$ of $(d(n,
k-1)+1) \ \ n$-independent nodes such that more than one curves of
degree $k$ pass through all its nodes.
\end{theorem}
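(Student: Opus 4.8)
The plan is to prove the two assertions separately. For the first one (uniqueness) I would argue by contradiction through a dimension count in ${\mathcal P}_{n,\Xset}$. Suppose $p_1,p_2\in\Pi_k\setminus\Pi_{k-1}$ are non-proportional and both vanish on $\Xset$, where $\#\Xset=d(n,k-1)+2$. Since $\Xset$ is $n$-independent, Proposition~\ref{PnX} gives $\dim{\mathcal P}_{n,\Xset}=N_n-\#\Xset=N_{n-k+1}-2$, using $d(n,k-1)=N_n-N_{n-k+1}$. As $p_1,p_2$ vanish on $\Xset$ and have degree $k\le n$, the subspaces $p_i\Pi_{n-k}:=\{p_iq:q\in\Pi_{n-k}\}$ ($i=1,2$), and hence their sum $V:=p_1\Pi_{n-k}+p_2\Pi_{n-k}$, lie in ${\mathcal P}_{n,\Xset}$. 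To bound $\dim V$ from below I would first describe the intersection: with $g:=\gcd(p_1,p_2)$ and $j:=\deg g$, a unique-factorization argument in the polynomial ring gives $p_1\Pi_{n-k}\cap p_2\Pi_{n-k}=(p_1p_2/g)\,\Pi_{n-2k+j}$, of dimension $N_{n-2k+j}$ (with $N_m:=0$ for $m<0$). Since $p_1,p_2$ are non-proportional of degree $k$, we have $j\le k-1$, so $n-2k+j\le n-k-1$ and therefore $N_{n-2k+j}\le N_{n-k-1}=N_{n-k}-(n-k+1)$. Inclusion-exclusion now yields
\[
\begin{aligned}
\dim V=2N_{n-k}-N_{n-2k+j}&\ge N_{n-k}+(n-k+1)\\
&>N_{n-k}+(n-k)=N_{n-k+1}-2=\dim{\mathcal P}_{n,\Xset},
\end{aligned}
\]
contradicting $V\subseteq{\mathcal P}_{n,\Xset}$. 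Together with the existence of the curve assumed in the statement, this proves uniqueness.

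For the second assertion I would construct $\tilde\Xset$ explicitly; the case $k=1$ is trivial (one node is $n$-independent and lies on infinitely many lines), so assume $k\ge2$. Fix a curve $\mu$ of degree $k-1$ without multiple components, say a product of $k-1$ distinct lines. Starting from a single point of $\mu$ and applying Proposition~\ref{extcurve} to $\mu$ (a curve of degree $k-1\le n$), we obtain a maximal $n$-independent set $\Xset_0\subset\mu$ with $\#\Xset_0=d(n,k-1)$. By Proposition~\ref{PnX} there is a nonzero $q\in\Pi_n$ vanishing on $\Xset_0$; since $q\mu$ is then a nonzero polynomial, we may choose a node $A$ with $q(A)\ne0$ and $A\notin\mu$. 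Then $q$ is an $n$-fundamental polynomial for $A$ with respect to $\Xset_0\cup\{A\}$, so Lemma~\ref{XA} shows that $\tilde\Xset:=\Xset_0\cup\{A\}$ is $n$-independent, with $\#\tilde\Xset=d(n,k-1)+1$ and $A\notin\mu$. Finally, for every line $\ell$ through $A$ the product $\mu\ell$ is a curve of degree $k$ passing through all of $\Xset_0$ (as $\Xset_0\subset\mu$) and through $A$ (as $A\in\ell$); distinct lines through $A$ yield non-proportional polynomials $\mu\ell$, so in fact infinitely many curves of degree $k$ pass through $\tilde\Xset$.

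The step I expect to be most delicate is the dimension bookkeeping in the uniqueness part, specifically the identification $p_1\Pi_{n-k}\cap p_2\Pi_{n-k}=(p_1p_2/g)\,\Pi_{n-2k+j}$ and the strict inequality it yields, which is tight (in the extreme cases it holds with a margin of exactly $1$, which is precisely why $d(n,k-1)+1$ nodes can carry two curves). Everything else reduces to routine applications of the already-established Proposition~\ref{PnX}, Proposition~\ref{extcurve} and Lemma~\ref{XA}.
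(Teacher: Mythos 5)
Your proof is correct, but the uniqueness half follows a genuinely different route from the one taken here. In this paper Theorem \ref{ht} is imported from \cite{HT}, and its uniqueness part is re-derived in Corollary \ref{maincor} as a consequence of the characterization Theorem \ref{mainth}: one deletes a node, invokes Theorem \ref{mainth} to produce a maximal curve $\mu_{k-1}$ of degree $k-1$ containing all but two nodes, and then uses Proposition \ref{maxcor} to see that every degree-$k$ curve through $\Xset$ has the form $\ell\mu_{k-1}$ with $\ell$ forced to pass through the two exceptional nodes. You instead argue directly: from $\dim{\mathcal P}_{n,\Xset}=N_{n-k+1}-2$ (Proposition \ref{PnX}) and the identification $p_1\Pi_{n-k}\cap p_2\Pi_{n-k}=(p_1p_2/\gcd(p_1,p_2))\,\Pi_{n-2k+j}$ you get $\dim\bigl(p_1\Pi_{n-k}+p_2\Pi_{n-k}\bigr)\ge N_{n-k}+n-k+1>\dim{\mathcal P}_{n,\Xset}$, a contradiction; I checked the bookkeeping, including the edge cases $n=k$ and $n-2k+j<0$, and it is sound (the key point $j\le k-1$ is exactly non-proportionality). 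This buys a short, self-contained proof that needs only Proposition \ref{PnX} and unique factorization, and your observation that the bound degrades by exactly $1$ when $j=k-1$ even explains \emph{why} the extremal sets of Theorem \ref{mainth} must carry a common component of degree $k-1$ --- though, unlike the paper's route, it does not by itself deliver that structural conclusion. Your existence construction (a maximal $n$-independent set on a degree-$(k-1)$ curve, extended via Proposition \ref{extcurve}, plus one outside node attached via Lemma \ref{XA}, through which the pencil $\mu\ell$ runs) is essentially the same configuration the paper uses in the reverse implication of Theorem \ref{mainth}, and is correct.
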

Let us mention that this result, in the case $k=n-1,$ was established in \cite{BHT}.

In this section we give a characterization of the case when more
than one curve of degree $k, \ k\le n,$ passes through the nodes of
an $n$-independent set $\Xset$ of cardinality $d(n, k-1)+1.$

As we will see later this result is a generalization of Theorem \ref{ht}

\begin{theorem}\label{mainth}
Given a set of $n$-independent nodes $\Xset$ with $\#\Xset = d(n, k-1)+1.$
Then there are at least $2$ curves of degree
$k$ passing through all nodes of $\Xset$ if and only if  there exists a
maximal curve $\mu$ of degree $k-1$ passing through $d(n, k-1)$ nodes of
$\Xset$ and the remaining node
of $\Xset$ is outside of  $\mu$.
\end{theorem}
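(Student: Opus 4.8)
The plan is to prove the two directions separately, with the forward direction being the substantive one. For the easy direction, suppose $\mu$ is a maximal curve of degree $k-1$ through $d(n,k-1)$ nodes of $\Xset$, and let $B$ be the remaining node, with $B\notin\mu$. Since $\Xset$ is $n$-independent, $B$ has a fundamental polynomial $p=p^\star_{B,\Xset}$, which vanishes on the $d(n,k-1)$ nodes lying in $\mu$; because those nodes form a maximal $n$-independent subset of $\mu$, Proposition~\ref{maxcurve}(ii) gives $p=\mu r$ with $r\in\Pi_{n-k+1}$. Now pick any line $\ell$ through $B$; then $\mu\ell$ and $\mu\ell'$ for two distinct such lines are curves of degree $k$ vanishing on all of $\Xset$ (the first $d(n,k-1)$ nodes via $\mu$, and $B$ via $\ell$), and they are different, so at least two curves of degree $k$ pass through $\Xset$.

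For the forward direction, assume two distinct curves $\sigma_1,\sigma_2$ of degree $k$ pass through all of $\Xset$. The first goal is to locate a large subset of $\Xset$ on a curve of degree $k-1$. Using Lemma~\ref{ext} enlarge $\Xset$ to an $n$-poised set $\wti\Xset$. By Lemma~\ref{2cor}, the family of curves of degree $k$ through $\Xset$ is rich: for every node $A\in\wti\Xset\setminus\Xset$ there is such a curve also passing through $A$. In fact, since $\#\Xset=d(n,k-1)+1$ and $\dim\Pi_k=d(n,k)+\,(\text{something})$—more precisely $\dim{{\mathcal P}}_{k,\Xset}\ge N_k-\#\Xset$, and one computes $N_k-(d(n,k-1)+1)=N_k-d(n,k-1)-1$, which one checks (using $d(n,j)=N_n-N_{n-j}$) exceeds the dimension needed for two independent curves—the space ${{\mathcal P}}_{k,\Xset}$ has dimension at least $2$. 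I would then argue that a generic member of this pencil (or linear space) of degree-$k$ curves through $\Xset$ is reducible: for a suitable node $A$, the curve of degree $k$ through $\Xset\cup\{A\}$ produced by Lemma~\ref{2cor}, intersected against the structure forced by having $d(n,k-1)+1$ nodes plus $A$, must split off a linear factor. The key quantitative input is the count: $d(n,k)-d(n,k-1)=n+2-k$ (the number of degrees of freedom between degree $k-1$ and degree $k$ curves through the same node pattern is a line's worth plus one), which forces a component of degree $1$ to appear.

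Concretely, here is the mechanism I expect to work. Take $A\notin\Xset$ generic and let $\tau$ be a curve of degree $k$ through $\Xset\cup\{A\}$, guaranteed by Lemma~\ref{2cor}. Choose $n+2-k$ further generic points $A_1,\dots,A_{n+2-k}$ on $\Xset$'s ambient space but in "general position" so that $\Xset\cup\{A,A_1,\dots,A_{n+2-k}\}$ still carries a degree-$k$ curve only if that curve is forced to degenerate; the point is that $d(n,k-1)+1+(n+2-k)>d(n,k)$ is false in general, so instead I would add points \emph{on a fixed generic line} $\ell$ and on the curve $\sigma_1$. Then a degree-$k$ curve through $\Xset$ and through $n+1$ points of $\ell$ must (by Proposition~\ref{maxline}) contain $\ell$ as a component, hence equals $\ell\cdot\nu$ with $\nu\in\Pi_{k-1}$; since this curve also passes through all $d(n,k-1)+1$ nodes of $\Xset$ and at most one of them lies on $\ell$ (generic $\ell$), the remaining $d(n,k-1)$ nodes lie on $\nu$, a curve of degree $k-1$. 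That $n$-independent set of $d(n,k-1)$ nodes on $\nu$ is then maximal by Definition~\ref{def:maximal}, and the one excluded node is off $\nu$ provided $\nu$ does not pass through it—which one arranges by genericity of $\ell$, since if $\nu$ contained all $d(n,k-1)+1$ nodes then $\Xset\subset\nu$ and Theorem~\ref{ht} (with $k-1$ in place of $k$) would contradict $\#\Xset=d(n,k-1)+1>d(n,(k-1)-1)+2$ in general position, or directly all degree-$k$ curves through $\Xset$ would be $\nu$ times a line, still leaving two of them. Finally, set $\mu:=\nu$: it is a maximal curve of degree $k-1$ through exactly $d(n,k-1)$ nodes of $\Xset$, with the last node outside.

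The main obstacle I anticipate is making the genericity argument rigorous: one must ensure that the auxiliary line $\ell$ can be chosen so that (a) it avoids all but at most one node of $\Xset$, (b) the degree-$k$ curve through $\Xset$ and enough points of $\ell$ genuinely exists (this needs the dimension count $\dim{{\mathcal P}}_{k,\Xset}\ge 2$ together with $\ell$ contributing at most $n+1$ independent conditions, so one needs $\dim{{\mathcal P}}_{k,\Xset}>n+1-(\text{nodes of }\Xset\text{ on }\ell)$, i.e. a careful bookkeeping of exactly how many points of $\ell$ to impose), and (c) the residual $\nu$ does not absorb the last node. Step (b) is where the precise value of $d(n,k-1)$ must be used rather than an inequality, and it is the crux; Lemma~\ref{2cor} and its Linear-Algebra proof, iterated, should supply exactly the needed count of free points. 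Once the reducible curve $\ell\cdot\nu$ is in hand, identifying $\nu$ as a maximal curve is immediate from Proposition~\ref{maxcurve}(ii), and the equivalence with Theorem~\ref{ht} (claimed in the paragraph preceding the statement) falls out by taking the excluded node to lie \emph{on} $\mu$, forcing all of $\Xset$ into $\mu$ and recovering uniqueness.
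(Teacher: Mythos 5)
Your reverse implication is fine: $\mu\ell$ for varying lines $\ell$ through the off-curve node gives infinitely many degree-$k$ curves through $\Xset$, matching (indeed slightly simplifying) the paper's argument. The forward direction, however, has a genuine gap exactly at the point you flag as ``the crux,'' and the mechanism you propose cannot be repaired as stated. Your plan is to force a degree-$k$ curve through $\Xset$ to pass through $k+1$ (you write $n+1$) points of an auxiliary generic line $\ell$, so that Proposition~\ref{maxline} splits off $\ell$ as a component. But each imposed point is one linear condition on $\Pset_{k,\Xset}$, and the hypothesis only guarantees $\dim\Pset_{k,\Xset}\ge 2$ (in fact, once the theorem is proved, the dimension is exactly $2$). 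So you have exactly one condition to spare: you can pass a degree-$k$ curve through $\Xset$ and \emph{one} extra point, never through $k+1$ collinear extra points. Iterating Lemma~\ref{2cor} does not help either, since after adding one point you can no longer certify that \emph{two} distinct curves survive, which is the hypothesis that lemma needs. No amount of bookkeeping with $d(n,k-1)$ fixes this; the linear factor cannot be extracted inside $\Pi_k$.

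The paper's key idea, which is what you are missing, is to move the divisibility argument up to degree $n$, where maximality gives a strong factorization statement for free. Concretely: choose $B\notin\Xset$ generic (off $\sigma_1,\sigma_2$, off all lines through two nodes of $\Xset$, with $\Xset\cup\{B\}$ still $n$-independent), get a third curve $\sigma\in\Pi_k$ through $\Xset\cup\{B\}$ via Lemma~\ref{2cor}, and extend $\Xset\cup\{B\}$ by $n-k$ points $C_1,\dots,C_{n-k}$ of $\sigma$ to a maximal $n$-independent set $\Zset\subset\sigma$ of cardinality $d(n,k)$ (Proposition~\ref{extcurve}). Now the degree-$n$ polynomial $\sigma_1\,\wti{\ell}\,\ell_1\cdots\ell_{n-k-1}$, where $\ell_i$ is a line through $C_i$ only and $\wti{\ell}$ is the line through $B$ and $C_{n-k}$, vanishes on all of $\Zset$, so maximality of $\sigma$ forces $\sigma$ to divide it; cancelling the $\ell_i$ (which cannot divide $\sigma$) leaves $\sigma_1\wti{\ell}=\sigma\ell'$, and since $\sigma\ne\sigma_1$ this forces $\wti{\ell}\mid\sigma$, i.e.\ $\sigma=\wti{\ell}\,r$ with $r\in\Pi_{k-1}$. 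Because $\wti{\ell}$ meets $\Xset$ in at most one node (genericity of $B$), $r$ contains at least $d(n,k-1)$ nodes of $\Xset$ and is therefore the desired maximal curve. Your endgame (identifying the residual curve as maximal and the last node as lying off it) is essentially the same as the paper's once the factorization is in hand, but the factorization itself is obtained by this degree-$n$ detour, not by imposing collinear points in degree $k$.
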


\begin{proof}
Let us start with the inverse implication. Assume that $d(n, k-1)$
nodes of $\Xset$ are located on a curve $\mu$ of degree $k-1$.
Therefore, as it is mentioned in the formulation of Theorem, the
curve $\mu$ is maximal and the remaining node of $\Xset,$ which we
denote by $A,$ is outside of it: $A\notin \mu.$ Now, according to
Proposition \ref{maxcor}, we have
$$\Pset_{k,\Xset}= \left\{\alpha\mu | \alpha\in\Pi_1,\ \alpha(A)=0\right\}.$$
Therefore we get readily
$$\dim\Pset_{k,\Xset}=\dim\left\{\alpha | \alpha\in\Pi_1,\ \alpha(A)=0\right\}=2.$$

Now let us prove the direct implication. Assume that there are two
curves of degree $k:\ \sigma_1$ and $\sigma_2$ that pass through
all the nodes of the $n$-independent set $\Xset$, $\#\Xset = d(n,
k-1)+1$. Next, choose a node $B\notin\Xset$ such that the
following three conditions are satisfied:

(i) $B$ does not belong to any line passing through two nodes of
$\Xset,$

(ii) $B$ does not belong to the curves $\sigma_1$ and $\sigma_2,$

(iii) The set $\Xset\cup \{B\}$ is $n$-independent.

Let us verify that one can find a such node. Indeed, in view of Lemma \ref{ext}, we can start by choosing a node $B'$ satisfying the condition
(iii). Then notice that, according to Lemma \ref{eps}, for some positive $\epsilon$ all the nodes
in $\epsilon$ neighborhood of $B'$ satisfy the condition (iii).
Finally, from this neighborhood we can choose a node $B$
satisfying the condition (i) and (ii), too.

In view of Proposition \ref{2cor} there is a curve of degree $k$ passing through all
the nodes of $\Yset:=\Xset\cup \{B\}.$ Denote a such curve by
$\sigma.$ In view of (ii) $\sigma$ is different from $\sigma_1$
and $\sigma_2.$

Next, by using Proposition \ref{extcurve}, let us extend the set $\Yset$ till a maximal
$n$-independent set $\Zset\subset\sigma.$ Notice that, since
$\#\Zset = d(n,k)$, we need to add $d(n, k)-(d(n, k-1)+2) = n-k$
nodes, denoted by $C_1,\ldots,C_{n-k}$:

$$\Zset:=\Xset\cup \{B\} \cup \left\{C_i\right\}_{i=1}^{n-k}.$$

 Thus the curve $\sigma$ becomes maximal
with respect to the set $\Zset.$

Then let us consider $n-k-1$ lines $\ell_1, \ell_2, \ldots ,
\ell_{n-k-1}$ passing through the nodes $C_1, C_2,\ldots,
C_{n-k-1},$ respectively. We require that each line passes through
only one of the mentioned nodes and therefore the lines are
distinct. We require also that none of these lines is a component
(factor) of $\sigma.$ Finally let us denote by $\tilde{\ell}$
the line passing through $B$ and $C_{n-k}.$

Now notice that the following polynomial of degree $n$ vanishes at
all points of $\Zset$
\begin{equation}\label{C_2}
\sigma_1 \, \tilde{\ell} \, \ell_1 \, \ell_2 \dots \,
\ell_{n-k-1}.
\end{equation}
Consequently, in view of Proposition \ref{maxcor}, $\sigma$ divides this polynomial:
\begin{equation}\label{sigmalq}
\sigma_1 \, \tilde{\ell} \, \ell_1 \, \ell_2 \dots \, \ell_{n-k-1}
= \sigma \, q , \quad q \in\Pi_{n-k}.
\end{equation}
The distinct lines  $\ell_1, \ell_2, \dots , \ell_{n-k-1}$ do not
divide the polynomial $\sigma \in\Pi_{k}$, therefore all they have
to divide $q \in\Pi_{n-k}.$ Thus $q=\ell_1\dots\ell_{n-k-1}\ell',$
where $\ell'\in\Pi_1.$ Therefore, we get from \eqref{sigmalq}:
\begin{equation}\label{C_max}
\sigma_1 \, \tilde{\ell} = \sigma \, \ell'.
\end{equation}
If the lines $\tilde{\ell}, \ell'$ coincide then the curves
$\sigma_1, \sigma$ coincide, which is impossible. Therefore the
line $\tilde{\ell}$ has to divide $\sigma \in \Pi_k$:
\begin{equation*}
\sigma = \tilde{\ell} \, r, \quad r \in \Pi_{k-1}.
\end{equation*}
Let us study this relation closer. We are going to derive from here that
the curve $r$ passes through all the nodes of the set $\Xset$
but one. Indeed, $\sigma$ passes through all
the nodes of $\Xset.$ Therefore these nodes are either in the curve $r$ or in the line $\tilde
\ell.$ But this line passes through $B$, and
according to (i), it passes  through at most one node of $\Xset.$
Thus $r$ passes through at least $d(n,k-1)$ nodes of $\Xset$ and therefore it is a maximal curve of degree $k-1.$
On the other hand, according to Proposition \ref{maxcurve}, the curve $r$ of degree $k-1$ can pass through at most $d(n,k-1)$ independent nodes. Thus, we conclude that $r$ passes through exactly $d(n,k-1)$ nodes of $\Xset$.
\end{proof}

\section{Two corollaries}

As it was mentioned earlier, our main result -- Theorem \ref{mainth} yields the uniqueness result: Theorem \ref{ht}, which states that the minimal number of $n$-independent points
 determining uniquely a curve of degree $k, k \leq n-1$ equals to $d(n, k-1) + 2$ (see \cite{HT}, Theorem 2.1):

\begin{corollary}\label{maincor}
Given a set of $n$-independent nodes $\Xset$, $\#\Xset = d(n, k-1)+2.$
Then there can be at most one curve of degree
$k$ which passes through all its nodes.
\end{corollary}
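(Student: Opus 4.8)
The plan is to deduce Corollary~\ref{maincor} from Theorem~\ref{mainth} by a short contradiction argument, passing through a subset of cardinality $d(n,k-1)+1$. Suppose to the contrary that $\Xset$ is $n$-independent with $\#\Xset = d(n,k-1)+2$ and that two distinct curves $\sigma_1,\sigma_2$ of degree $k$ both pass through all nodes of $\Xset$. Pick any node $A\in\Xset$ and set $\Xset':=\Xset\setminus\{A\}$, so $\#\Xset' = d(n,k-1)+1$ and $\Xset'$ is $n$-independent (a subset of an $n$-independent set). Both $\sigma_1$ and $\sigma_2$ pass through all nodes of $\Xset'$, so by the direct implication of Theorem~\ref{mainth} there is a maximal curve $\mu$ of degree $k-1$ passing through exactly $d(n,k-1)$ nodes of $\Xset'$, with the single remaining node of $\Xset'$ lying off $\mu$.

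The crux is then to exploit the freedom in the choice of $A$. Since $\#\Xset = d(n,k-1)+2$, we can choose \emph{two} distinct nodes $A_1, A_2 \in \Xset$ and form $\Xset_i := \Xset\setminus\{A_i\}$ for $i=1,2$; each is $n$-independent of cardinality $d(n,k-1)+1$, and each is still contained in both $\sigma_1$ and $\sigma_2$. Applying Theorem~\ref{mainth} to $\Xset_1$ yields a maximal curve $\mu$ of degree $k-1$ through $d(n,k-1)$ of the $d(n,k-1)+1$ nodes of $\Xset_1$; let $R_1$ be the one node of $\Xset_1$ not on $\mu$. Now consider where $A_1$ sits relative to $\mu$: the set of nodes of $\Xset$ lying on $\mu$ has cardinality either $d(n,k-1)$ or $d(n,k-1)+1$. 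The latter is impossible, since a curve of degree $k-1$ cannot pass through more than $d(n,k-1)$ $n$-independent nodes by Proposition~\ref{maxcurve}(i). Hence exactly $d(n,k-1)$ nodes of $\Xset$ lie on $\mu$, and the two nodes of $\Xset$ off $\mu$ are precisely $R_1$ and $A_1$.

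Next I would run the same argument with $A_2$ in place of $A_1$. Theorem~\ref{mainth} applied to $\Xset_2$ produces a maximal curve $\nu$ of degree $k-1$ through $d(n,k-1)$ nodes of $\Xset_2$, and the same counting shows exactly $d(n,k-1)$ nodes of $\Xset$ lie on $\nu$, the two exceptional nodes being $R_2$ (the excluded node of $\Xset_2$) and $A_2$. The point is to derive a contradiction with the uniqueness half of the analysis: both $\mu$ and $\nu$ pass through $\Xset$ minus a two-element set, and by choosing $A_1, A_2$ suitably (e.g.\ choosing $A_2$ to be a node \emph{on} $\mu$, which is possible since at most two nodes of $\Xset$ lie off $\mu$ while $d(n,k-1)\ge 2$) one forces $\nu$ to contain $d(n,k-1)-1$ nodes in common with $\mu$ together with one more node, i.e.\ at least $d(n,k-1)$ common nodes; since these are $n$-independent and lie on a curve of degree $k-1$, Proposition~\ref{maxcurve}(ii) forces $\nu=\mu$. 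But then the excluded node $R_2$ of $\Xset_2$ is off $\mu$, so $R_2\in\{R_1,A_1\}$; tracking which nodes are on and off $\mu$ in both descriptions pins down $A_1, A_2, R_1$ and eventually contradicts the requirement that the excluded node of $\Xset_i$ be off the maximal curve.

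The main obstacle I expect is the bookkeeping in this last step: making the choices of $A_1$ and $A_2$ precise enough that the two applications of Theorem~\ref{mainth} are genuinely incompatible, rather than merely producing the same curve $\mu$ twice (which would be consistent and prove nothing). The cleanest route is probably: fix $\mu$ from the first application; it misses exactly two nodes of $\Xset$, call them $P$ and $Q$; now apply Theorem~\ref{mainth} to $\Xset\setminus\{P\}$, which is contained in both $\sigma_1,\sigma_2$, to get a maximal curve $\mu'$ of degree $k-1$ through $d(n,k-1)$ of its nodes with one node off $\mu'$. Since $\mu$ already passes through all of $\Xset\setminus\{P,Q\}\subset\Xset\setminus\{P\}$, that is $d(n,k-1)$ nodes, $\mu$ itself witnesses the conclusion, and maximality plus Proposition~\ref{maxcurve}(ii) give $\mu'=\mu$; hence $Q$, the unique node of $\Xset\setminus\{P\}$ off $\mu$, must be off $\mu'=\mu$ — fine so far. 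Repeating with $\Xset\setminus\{Q\}$ forces its excluded-off node to be $P$, again consistent. So a purely combinatorial contradiction does not drop out, and the real content must come from a dimension count on $\Pset_{k,\Xset}$: since $\Xset$ is $n$-independent with $\#\Xset = d(n,k-1)+2$, and since by the inverse implication computation in the proof of Theorem~\ref{mainth} a maximal $\mu$ missing two nodes of $\Xset$ would give $\dim\Pset_{k,\Xset\setminus\{P\}} = 2$, adding back the condition at $P$ (which is nontrivial because $\Xset$ is $n$-independent, hence $k$-independent as $k\le n$... careful here — independence is with respect to $\Pi_n$, not $\Pi_k$) must cut the dimension, forcing $\dim\Pset_{k,\Xset}\le 1$, i.e.\ at most one curve of degree $k$ through $\Xset$, the desired contradiction. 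I would therefore structure the final proof around this dimension drop rather than around pure node-counting.
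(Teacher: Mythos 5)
Your first step coincides with the paper's: delete a node, apply the direct implication of Theorem~\ref{mainth} to the resulting $n$-independent set of cardinality $d(n,k-1)+1$, and conclude (using Proposition~\ref{maxcurve}~i) to rule out the deleted node lying on the curve) that there is a maximal curve $\mu$ of degree $k-1$ containing all nodes of $\Xset$ except exactly two, say $P$ and $Q$. But from there the proof is not completed, and you in effect say so yourself: the combinatorial double application of Theorem~\ref{mainth} is, as you observe, perfectly self-consistent and yields no contradiction, and the ``dimension drop'' you then invoke is asserted rather than proved --- you never show that adding the vanishing condition at $P$ actually cuts $\dim\Pset_{k,\Xset\setminus\{P\}}$ from $2$ to $1$. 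Since that drop \emph{is} the content of the corollary, this is a genuine gap, not a presentational one.

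The missing step is short and is exactly where the paper goes. Because $\mu$ is a maximal curve of degree $k-1$, Proposition~\ref{maxcor} (equivalently, Proposition~\ref{maxcurve}~ii applied to the $d(n,k-1)$ $n$-independent nodes of $\Xset$ lying on $\mu$) forces every polynomial $p\in\Pi_k$ vanishing on all of $\Xset$ to factor as $p=\ell\,\mu$ with $\ell\in\Pi_1$. Since $p(P)=p(Q)=0$ while $\mu(P)\ne 0$ and $\mu(Q)\ne 0$, the line $\ell$ must vanish at the two distinct points $P$ and $Q$, hence is determined uniquely (up to a constant factor), and therefore so is $p$. This factorization also dissolves your worry about $n$- versus $k$-independence: the dimension count is carried out on the linear factor $\ell$, where two point conditions at distinct points trivially cut $\dim\Pi_1=3$ down to $1$, not on $\Pset_{k,\Xset}$ directly. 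Finally, no proof by contradiction is needed: once the hypothesis of Theorem~\ref{mainth} fails for $\Xset\setminus\{A\}$ you are already done, and when it holds the factorization argument directly exhibits the unique candidate curve.
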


\begin{proof} Choose a node $A\in \Xset$ and consider the set $\Yset:=\Xset\setminus\{A\}.$
If there is at most one curve of degree
$k$ which passes through all nodes of $\Yset$ then we are done. Next suppose, that there are at least two curves of degree
$k$ which pass through all nodes of the set $\Yset.$ Then, according to Theorem \ref{mainth}, there is a node $B\in \Yset$
and a maximal curve  $\mu_{k-1}$ of degree $k-1$ which passes through all the nodes of $\Yset\setminus\{B\}.$
Moreover, all the nodes of $\Xset$ but $A$ and $B$ are located in the curve $\mu_{k-1}.$
Now, in view of Proposition \ref{maxcor}, any curve of degree $k$ passing through all the nodes of $\Xset$ has the following form
$$p=\ell\mu_{k-1},$$
where $\ell\in\Pi_1.$
Finally notice that the line $\ell$ passes through $A$ and $B$ and therefore is determined in a unique way.
Hence $p$ is determined uniquelly.
\end{proof}

\begin{corollary}
Let $\Xset$ be an $n$-poised set of nodes and  $\ell$ be a used line which passes through exactly $3$ nodes. Then
it is used either by exactly one or by exactly three nodes from $\Xset.$  Moreover, if it is
used by exactly three nodes, then they are noncollinear.
\end{corollary}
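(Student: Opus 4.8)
Write $A_1,A_2,A_3$ for the three nodes of $\Xset$ lying on $\ell$ and set $\Xset':=\Xset\setminus\{A_1,A_2,A_3\}$, so that $\#\Xset'=N_n-3$ and no node of $\Xset'$ lies on $\ell$. My first step is to turn ``$A$ uses $\ell$'' into a statement about curves of degree $n-1$: a node $A$ uses $\ell$ \emph{if and only if} $A\in\Xset'$ and $\Xset'\setminus\{A\}$ lies on a curve of degree $\le n-1$ not passing through $A$. Indeed, if $p^\star_A=\ell\gamma$ with $\gamma\in\Pi_{n-1}$, then $\gamma$ vanishes on $\Xset'\setminus\{A\}$ but not at $A$ (because $\ell$ vanishes at none of these points); conversely such a $\gamma$ makes $\ell\gamma\in\Pi_n$ vanish on $\Xset\setminus\{A\}$ and not at $A$, so by $n$-poisedness $\ell\gamma$ is a nonzero multiple of $p^\star_A$. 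Write $\mathcal U\subseteq\Xset'$ for the set of nodes using $\ell$; the hypothesis ``$\ell$ is used'' says exactly $\mathcal U\neq\varnothing$. It then suffices to prove that $\#\mathcal U\ge 2$ forces $\#\mathcal U=3$ with the three nodes noncollinear.

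So assume $n\ge 3$ (I would handle $n=2$ separately) and pick distinct $A,B\in\mathcal U$, $p^\star_A=\ell r_A$, $p^\star_B=\ell r_B$. Then $r_A,r_B\in\Pi_{n-1}$ are linearly independent (their restrictions to $\Xset'$ are fundamental for $A$, resp.\ $B$) and both vanish on $\Xset'\setminus\{A,B\}$, a set of $d(n,n-2)+1$ $n$-independent nodes; hence more than one curve of degree $n-1$ passes through it. Theorem~\ref{mainth} with $k=n-1$ then furnishes a maximal curve $\mu$ of degree $n-2$ through exactly $d(n,n-2)$ of those nodes, the remaining one, $C$, lying off $\mu$; thus $\Xset'\setminus\{A,B,C\}\subseteq\mu$. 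Since a maximal curve has no multiple components (apply Proposition~\ref{maxcurve}(i) to its reduction), Proposition~\ref{maxcurve}(i) for $\mu$ and the $n$-poised $\Xset$ shows $\mu$ carries at most $d(n,n-2)$ nodes of $\Xset$; therefore $\Xset\cap\mu=\Xset'\setminus\{A,B,C\}$ exactly, and $A,B,C,A_1,A_2,A_3$ all lie off $\mu$.

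I would then close with three short steps. (1) \emph{Factorization and noncollinearity:} since $r_A$ vanishes on the $d(n,n-2)$ independent nodes of $\mu$, Proposition~\ref{maxcurve}(ii) gives $r_A=\mu s_A$; a degree count together with $r_A(B)=0\neq\mu(B)$ forces $s_A$ to be a genuine line, and evaluating at $B$, $C$, $A$ shows $s_A$ meets $B$ and $C$ but not $A$, so $A,B,C$ are noncollinear. (2) \emph{$C$ also uses $\ell$:} with $\lambda$ the line through $A$ and $B$, the polynomial $\ell\,\mu\,\lambda\in\Pi_n$ vanishes on $\Xset\setminus\{C\}$ and not at $C$ (as $C\notin\lambda$ by noncollinearity), hence is a multiple of $p^\star_C$ divisible by $\ell$. (3) \emph{No fourth node:} if $D\in\mathcal U\setminus\{A,B,C\}$, then $D\in\Xset'\setminus\{A,B,C\}\subseteq\mu$; choosing a line $\ell_D$ through $D$ not dividing $\mu$, the polynomial $r_D\ell_D\in\Pi_n$ vanishes on $\Xset'\setminus\{A,B,C\}$, so Proposition~\ref{maxcurve}(ii) gives $r_D\ell_D=\mu t$ with $t\in\Pi_2$, whence $\ell_D\mid t$ and $r_D=\mu t'$ with $t'\in\Pi_1$ --- but then $t'$ vanishes at the noncollinear $A,B,C$ (where $r_D$ vanishes and $\mu$ does not), a contradiction. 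Hence $\mathcal U=\{A,B,C\}$, which with $\mathcal U\neq\varnothing$ gives $\#\mathcal U\in\{1,3\}$ and noncollinearity when $\#\mathcal U=3$. For $n=2$: were the three nodes of $\Xset'$ collinear on a line $m$, then $\ell m\in\Pi_2$ would vanish on all six nodes of $\Xset$, contradicting $2$-poisedness; so $\Xset'$ is $1$-poised, all three of its (noncollinear) nodes use $\ell$, and the nodes on $\ell$ do not, so $\#\mathcal U=3$.

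The delicate part, I expect, will be the control of $\mu$: one must be certain that Theorem~\ref{mainth} returns a genuine curve of degree exactly $n-2$ with no multiple components, and that $\Xset\cap\mu$ is \emph{precisely} $\Xset'\setminus\{A,B,C\}$ --- every later step (the factorizations $r_A=\mu s_A$ and $r_D=\mu t'$, the construction of $p^\star_C$, and the exclusion of a fourth node) rests on that identity and on the noncollinearity of $A,B,C$. The low-degree case $n=2$, where ``degree $n-2$'' is not a curve, is a minor but necessary separate point.
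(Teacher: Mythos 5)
Your proof is correct, and its skeleton coincides with the paper's: both remove the three collinear nodes and two users of $\ell$, observe that the remaining $N-5=d(n,n-2)+1$ $n$-independent nodes carry two linearly independent polynomials of degree $n-1$ (your $r_A,r_B$, the paper's $q_1,q_2$), and invoke Theorem~\ref{mainth} with $k=n-1$ to produce the maximal curve $\mu$ of degree $n-2$ together with the exceptional node off it. You diverge in the second half. To show that the exceptional node uses $\ell$, the paper factors its fundamental polynomial through $\mu$ via Proposition~\ref{maxcor} and then notes that the resulting conic factor vanishes at the three collinear nodes on $\ell$, hence is divisible by $\ell$; you instead first establish the noncollinearity of $A,B,C$ from the factorization $r_A=\mu s_A$ and then exhibit the fundamental polynomial of $C$ explicitly as $\ell\,\mu\,\lambda$. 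To exclude a fourth user, the paper reruns the whole construction on the pair $(P,S)$, obtains a second maximal curve $\tilde\mu_{n-2}$, and identifies it with $\mu_{n-2}$ by the uniqueness result (Corollary~\ref{maincor} with $k=n-2$); your route is more economical --- the direct factorization $r_D=\mu t'$ with $t'\in\Pi_1$ vanishing at the noncollinear triple (indeed $r_D(D)\neq 0$ while $\mu(D)=0$ already yields the contradiction, so even the noncollinearity is not needed at this point). Two further points in your favour: you actually prove the noncollinearity assertion, which the paper states but never argues, and you treat the case $n=2$ separately, where a ``maximal curve of degree $n-2$'' is meaningless and the general argument does not literally apply.
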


\begin{proof}
Assume that $\ell\cap\Xset=\{A,B,C\}.$ Assume also that there are two nodes $P,Q\in \Xset$ using the line
$\ell:$
\begin{equation*}
p_{P}^{\star}=\ell \, q_1, \quad p_{Q}^{\star}={\ell} \, q_2,
\end{equation*}
where $q_1,q_2 \in \Pi_{n-1}.$

Both the polynomials $q_1, q_2$ vanish at $N-5$ nodes of the set $\Yset:=\Xset\setminus\{A,B,C,P,Q\}.$ Hence these $N-5=d(n, n-2)+1$ nodes do not uniquelly determine curve
of degree $n-1$ passing through them.
By the Proposition~\ref{mainth} there exists a maximal curve
$\mu_{n-2}$ of degree $n-2$ passing through $N-6$ nodes of $\Yset$ and the remaining node denoted by $R$ is outside.
Now, according to Proposition \ref{maxcor}, $\mu_{n-2}$ divides the fundamental polynomial of the node $R:$
\begin{equation}
p_{R}^{\star} = \mu_{n-2} \,q,
\end{equation}
where $q \in \Pi_2.$
This quadratic polynomial $q$ has to vanish at the three nodes $A,B,C\in\ell.$
Hence $q=\ell\ell'$ with $\ell'\in \Pi_1.$
Therefore, in view of Proposition \ref{maxline}, with $n=2,$ the node $R$ uses the line ${\ell}:$
\begin{equation}
p_{R}^{\star} = \mu_{n-2}{\ell}\ell' \quad \ell' \in \Pi_1 .
\end{equation}
Hence if two nodes $P,Q\in\Xset$ use the line ${\ell}$ then there exists
a third node $R\in\Xset$ using it and all the nodes $\Yset:=\Xset\setminus \{A,B,C,P,Q,R\}$ are located in a
maximal curve $\mu_{n-2}$ of degree $n-2:$
\begin{equation}\label{mu}\Yset\subset\mu_{n-2}.\end{equation}

Next, let us show that there is no
fourth node using ${\ell}$. We will prove this by the way of
contradiction. Assume that, except of the nodes $P,Q,R,$ there is a fourth node $S$ that uses ${\ell}.$ Of course we have that
$S\in\Yset.$

Then $P$ and $S$ are using $\ell$ therefore, as was proved above, there exists
a third node $T\in\Xset$ (which may coincide or not with $Q$ or $R$) using it and all the nodes of $\tilde\Yset:=\Xset\setminus \{A,B,C,P,S,T\}$ are located in a
maximal curve $\tilde\mu_{n-2}$ of degree $n-2.$ We have also that
\begin{equation}\label{tilde}
p_{S}^{\star} = \tilde\mu_{n-2}{\ell}\ell'' \quad \ell'' \in \Pi_1 .
\end{equation}

Now, notice that both $\mu_{n-2}$ and $\tilde\mu_{n-2}$ pass through all the nodes of the set
$\Zset:=\Xset\setminus \{A,B,C,P,Q,R,S,T\}$ with $\#\Zset\ge N-8.$

Now, according to the
Corollary~\ref{maincor}, with $k=n-2$, $N-8 = d(n, n-3)+2$ nodes determine
the curve of degree $n-2$ passing through them uniquely. Thus $\mu_{n-2}$ and $\tilde\mu_{n-2}$ coincide.

Therefore, in view of \eqref{mu} and \eqref{tilde}, $p_{S}^{\star}$ vanishes at all the nodes of $\Yset$,
which is a contradiction since $S\in\Yset.$
\end{proof}


{\noindent H. Hakopian, S. Toroyan\\
Department of Informatics and Applied Mathematics\\
Yerevan State University\\
A. Manukyan St. 1\\
0025 Yerevan, Armenia \\}

\noindent E-mails - \texttt{hakop@ysu.am}, \quad
\texttt{sofitoroyan@gmail.com}

\begin{thebibliography}{20}

\bibitem{BHT}
V. Bayramyan, H. Hakopian and S. Toroyan,
On the uniqueness of algebraic curves, Proceedings of YSU {\bf 1} (2015), 3--7.

\bibitem{B}
L. Berzolari, Sulla determinazione di una curva o di una superficie algebrica e su alcune
questioni di postulazione, Lomb. Ist. Rend. {\bf 47} (1914), 556--564.

\bibitem{CG2001}
J. M. Carnicer and M. Gasca, A conjecture on multivariate polynomial
interpolation, \emph{Rev. R. Acad. Cience. Exactas Fis. Nat. (Esp.)
Ser. A Mat., 95 (2001), 145-153.}

\bibitem{CG2001b}
J. M. Carnicer and M. Gasca, Planar configurations with simple Lagrange interpolation formulae,
in: T. Lyche and L.L.Schumaker (eds.), {Mathematical Methods in Curves and Surfaces: Oslo 2000,
Vanderbilt University Press, Nashville, 2001, 55--62.}


\bibitem{E} D. Eisenbud, M. Green M, and J. Harris,
Cayley-Bacharach Theorems and Conjectures, {Bull. Amer. Math. Soc.
(N.S.) {\bf 33} (1996) 295--324}.

\bibitem{H82}
H.A. Hakopian, Multivariate divided differences and multivariate interpolation of Lagrange and Hermite type, J. Approx. Theory {\bf 34} (1982) 286–305.


\bibitem{HJZ}
H. Hakopian, K. Jetter and G. Zimmermann, Vandermonde matrices for intersection points of curves, Jaen J. Approx. {\bf 1} (2009) 67--81.

\bibitem{HJZ2}
H. Hakopian, K. Jetter and G. Zimmermann, The Gasca-Maeztu
conjecture for $n=5$,  {Numer. Math.  {\bf 127} (2014) 685--713.}


\bibitem{HM}
Hakopian H., Malinyan A., Characterization of $n$-independent sets
with no more than $3n$ points, Jaen J. Approx. {\bf 4} (2012)
121--136.

\bibitem{HMush}
Hakopian H., Mushyan G., On multivariate segmental interpolation problem, J. Comp. Sci. \& Appl. Math. {\bf 1} (2015)
19--29.

\bibitem{HT}
Hakopian and S. Toroyan,
On the minimal number of nodes determining uniquelly algebraic curves, accepted in Proceedings of YSU.

\bibitem{R}
J. Radon, Zur mechanischen Kubatur, Monatsh. Math. {\bf 52} (1948) 286--300.

\bibitem{Raf}
L. Rafayelyan, Poised nodes set constructions on algebraic curves,
{East J. on Approx.} {\bf 17} (2011), 285--298.

\end{thebibliography}
\end{document}